\newtheorem{theorem}{Theorem}[section]
\newtheorem{lemma}{Lemma}[section]
\theoremstyle{definition}
\newtheorem{definition}{Definition}[section]
\newtheorem{example}{Example}[section]
\newtheorem{corollary}{Corollary}[section]
\theoremstyle{remark}
\newtheorem{remark}{Remark}[section]
\numberwithin{equation}{section}
\newcommand{\Mod}[1]{\ (\mathrm{mod}\ #1)}
\renewcommand{\Re}{\mathrm{Re}}
\renewcommand{\Im}{\mathrm{Im}}
\renewcommand{\leq}{\leqslant}
\renewcommand{\geq}{\geqslant}
\begin{document}

\title{Dirichlet law for factorization of integers, polynomials and permutations}


\author{}
\address{}
\curraddr{}
\email{}
\thanks{}

\author{Sun-Kai Leung}
\address{D\'epartement de math\'ematiques et de statistique\\
Universit\'e de Montr\'eal\\
CP 6128 succ. Centre-Ville\\
Montr\'eal, QC H3C 3J7\\
Canada}
\curraddr{}
\email{sun.kai.leung@umontreal.ca}
\thanks{}


\date{}

\dedicatory{}

\keywords{}

\begin{abstract}
Let $k \geq 2$ be an integer.
We prove that factorization of integers into $k$ parts follows the Dirichlet distribution $\mathrm{Dir}\left(\frac{1}{k},\ldots,\frac{1}{k}\right)$ by multidimensional contour integration, thereby generalizing the Deshouillers–Dress–Tenenbaum (DDT) arcsine law on divisors where $k=2$. The same holds for factorization of polynomials or permutations. Dirichlet distribution with arbitrary parameters can be modelled similarly.
\end{abstract}

\maketitle

\section{Introduction}

Given an integer $n \geq 1$, it is natural to study the distribution of its divisors over the interval ${[1,n]}$ (in logarithmic scale). Let $d$ be a random integer chosen uniformly from the divisors of $n.$ Then $D_n:=\frac{\log d}{\log n}$ is a random variable taking values in $[0,1].$ While one can show that
the sequence of random variables $\{D_n\}_{n=1}^{\infty}$ does not converge in distribution, Deshouillers, Dress and Tenenbaum \cite{tenenbaum1980lois} proved the 
mean of the corresponding distribution functions converges to that of the arcsine law. More precisely, uniformly for $u \in [0,1],$ we have
\begin{align*}
\frac{1}{x}\sum_{n \leq x}\mathbb{P}\left(D_n \leq u\right)=
\frac{2}{\pi} \arcsin{\sqrt{u}}
+O\left(\frac{1}{\sqrt{\log x}}\right),
\end{align*}
where 
\begin{align*}
\mathbb{P}\left(D_n \leq u\right):=
\frac{1}{\tau(n)}\sum_{\substack{d | n \\ d \leq n^{u}}}1
\end{align*}
is the distribution function of $D_n$ and the 
error term here is optimal (see also \cite[Chapter~6.2]{tenenbaum2015introduction}). 

Recently, Nyandwi and Smati \cite{nyandwi2014distribution} studied the  distribution of pairs of divisors of a given integer on average.
Similarly, they also proved the 
mean of the corresponding distribution functions converges to that of the beta two-dimensional law uniformly together with the optimal rate of convergence.

Our main goal here is to generalize their work to higher dimensions, which they claim is very technical following the usual approach (see \cite[p. 2]{nyandwi2018distribution}). Fix $k \geq 2.$ Given an integer $n \geq 1,$ let $(d_1,\ldots,d_k)$ be a random $k$-tuple chosen uniformly from the set of all possible factorization $\{(m_1,\ldots,m_k) \in \mathbb{N}^k \, : \, n=m_1\cdots m_k\}.$ Then $\boldsymbol{D}_n=(D_n^{(1)},\ldots,D_n^{(k)}):=\left(\frac{\log d_1}{\log n},\ldots,\frac{\log d_k}{\log n}\right)$ is a multivariate random variable taking values in $[0,1]^k.$ Similarly, we are interested in the mean
\begin{align*}
\frac{1}{x}\sum_{n \leq x}\mathbb{P}
\left(D_n^{(1)}\leq u_1,\ldots,D_n^{(k-1)}\leq u_{k-1}\right),
\end{align*}
where
\begin{align*}
\mathbb{P}\left(D_n^{(1)}\leq u_1,\ldots,D_n^{(k-1)}\leq u_{k-1}\right):=
\frac{1}{\tau_k(n)}\underset{d_1\cdots d_{k-1}|n}{\sum_{d_1 \leq n^{u_1}}\cdots\sum_{d_{k-1}\leq n^{u_{k-1}}}}1
\end{align*}
is the distribution function of $\boldsymbol{D}_n.$

Note that since $n=d_1\cdots d_k,$ the multivariate random variable $\boldsymbol{D}_n$ must satisfies 
\begin{align*}
1=D_n^{(1)}+\cdots+D_n^{(k)},
\end{align*}
and so it actually takes values in the $(k-1)$-dimensional probability simplex. We now turn to the Dirichlet distribution, which
is the most natural candidate of modeling such distribution.  
\begin{definition}
Let $k \geq 2.$ The Dirichlet distribution of dimension $k$ with parameters $\alpha_1,\ldots,\alpha_k>0$ is denoted by
$\mathrm{Dir}\left(\alpha_1,\ldots,\alpha_k\right), $ which is 
defined on the $(k-1)$-dimensional probability simplex
\begin{align*}
\Delta^{k-1}:=\{(t_1,\ldots,t_k) \in [0,1]^k \, : \, t_1+\cdots+t_k=1\}
\end{align*}
having density 
\begin{align*} 
f_{\boldsymbol{\alpha}}(t_1,\ldots,t_k):=\frac{\Gamma \left(\sum_{i=1}^k \alpha_i \right)}{\prod_{i=1}^{k}\Gamma(\alpha_i)}\prod_{i=1}^{k}t_i^{\alpha_i-1}.
\end{align*}
\end{definition}
For instance, when $k=2,$ Dirichlet distribution reduces to beta distribution  $\mathrm{Beta}\left(\alpha, \beta\right)$ with parameters $\alpha, \beta$.
In particular, $\mathrm{Beta}\left(\frac{1}{2}, \frac{1}{2}\right)$ is the arcsine distribution.

As we will see, factorization of integers into $k$ parts follows the Dirichlet distribution $\mathrm{Dir}\left(\frac{1}{k},\ldots,\frac{1}{k}\right).$
Since for each $i$ the parameter $\alpha_i=\frac{1}{k}$ is less than $1,$ the density $f_{\boldsymbol{\alpha}}(t_1,\ldots,t_k)$ blows up most rapidly at the $k$ vertices of the probability simplex. Therefore, our intuition that a typical factorization of integers into $k$ parts consists of one large factor and $k-1$ small factors is justified quantitatively.

By definition, for $u_1,\ldots,u_{k-1} \geq 0$ satisfying $u_1+\cdots+u_{k-1}\leq 1,$ the distribution function of $\mathrm{Dir}\left(\alpha_1,\ldots,\alpha_k \right)$ is given by
\begin{align*}
F_{\boldsymbol{\alpha}}(u_1,\ldots,u_{k-1}):=
\int_0^{u_1}\cdots\int_0^{u_{k-1}}f_{\boldsymbol{\alpha}}(t_1,\ldots,t_{k-1}, 1-t_1-\cdots -t_{k-1})
dt_1\cdots dt_{k-1}.
\end{align*}
From now on until Section \ref{section5}, we shall fix $\boldsymbol{\alpha}=\left(\frac{1}{k},\ldots,\frac{1}{k}\right)$ and omit the subscript.

The main results are stated as follows.

\begin{theorem} \label{thm:dirichlet} Let $k\geq 2$ be a fixed integer. Then uniformly for $x \geq 2$ and $u_1,\ldots,u_{k-1} \geq 0$ satisfying $u_1+\cdots+u_{k-1}\leq 1,$
we have
\begin{align} \label{fundamental}
\frac{1}{x}\sum_{n \leq x}\frac{1}{\tau_k(n)}\underset{d_1\cdots d_{k-1} \mid n}{\sum_{d_1 \leq n^{u_1}}\cdots
\sum_{d_{k-1} \leq n^{u_{k-1}}}}1
=F(u_1,\ldots,u_{k-1}) 
+O\left(\frac{1}{(\log x)^{\frac{1}{k}}}\right).
\end{align}
\end{theorem}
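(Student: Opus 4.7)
The plan is to invoke Perron's formula to detect the constraints $d_i \leq n^{u_i}$ and the outer truncation $n \leq x$, reducing the left-hand side of \eqref{fundamental} to a $k$-fold contour integral of a multivariable Dirichlet series, which is then analyzed by a multidimensional Selberg--Delange argument with Hankel contours.

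The central object is the multiplicative $k$-variable Dirichlet series
\[
F(\boldsymbol{s}) := \sum_{n=1}^{\infty} \frac{1}{\tau_k(n)} \sum_{n = d_1\cdots d_k} \prod_{i=1}^{k} d_i^{-s_i}.
\]
Specializing $s_1 = \cdots = s_k = s$ collapses the inner sum to $\tau_k(n)$, so $F(s,\ldots,s) = \zeta(s)$. More generally, the local factor of $F$ at $p$ equals $\frac{1}{k}\sum_{i=1}^{k} p^{-s_i}$ to leading order, and summing over primes gives $\frac{1}{k}\sum_{i} \log \zeta(s_i) + O(1)$ near $\Re s_i = 1$. A multivariate Selberg--Delange argument then yields the factorization
\[
F(\boldsymbol{s}) = \prod_{i=1}^{k} \zeta(s_i)^{1/k} \cdot G(\boldsymbol{s}),
\]
with $G$ holomorphic, uniformly bounded and non-vanishing on a tube domain $\{\Re s_i > 1-\eta\}$ for some $\eta>0$ (away from the branch cuts of $\zeta(s_i)^{1/k}$), normalized by $G(1,\ldots,1)=1$. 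This ``fractional polar'' structure is the source of the Dirichlet density.

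Applying Perron's formula to each indicator $\mathbf{1}_{d_i \leq n^{u_i}}$ ($i<k$) and once more to $\mathbf{1}_{n\leq x}$, and writing $T = \sum_{i<k} u_i \xi_i$, expresses the LHS of \eqref{fundamental} as
\[
\frac{1}{(2\pi i)^{k}} \int_{(c)} \int_{(c_1)} \cdots \int_{(c_{k-1})} F\bigl(s+\xi_1 - T,\, \ldots,\, s+\xi_{k-1}-T,\, s - T\bigr) \frac{x^s}{s} \prod_{i=1}^{k-1}\frac{d\xi_i}{\xi_i}\, ds.
\]
Substituting the factorization and shifting the $s$-contour onto a Hankel contour around $s=1$, the confluence of the $k$ branch points of $\zeta(s_i)^{1/k}$ yields a simple pole at $s=1$. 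To compute its residue, one rescales $\xi_i = (s-1)\eta_i$, invokes the Gamma representation $z^{-1/k} = \Gamma(1/k)^{-1}\int_0^\infty e^{-zt} t^{1/k-1}\, dt$ in each $\zeta^{1/k}$ factor, swaps the order of integration, and collapses the $\eta_i$-integrals via $(2\pi i)^{-1}\int_{(c)} e^{\eta a} d\eta/\eta = \mathbf{1}_{a>0}$ into the constraints $\mathbf{1}_{t_i \leq u_i \tau}$ (with $\tau = \sum_j t_j$). A final change of variables $t_i = \tau s_i$ onto the simplex produces the residue
\[
\frac{1}{\Gamma(1/k)^{k}} \int_{\Delta^{k-1}} \prod_{i=1}^{k} s_i^{1/k-1} \prod_{i=1}^{k-1} \mathbf{1}_{s_i \leq u_i}\, d\mathbf{s} \;=\; F(u_1,\ldots,u_{k-1}),
\]
and hence the main term $xF(\boldsymbol{u})$.

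The principal obstacle is the analytic verification that the auxiliary factor $G(\boldsymbol{s})$ extends holomorphically and is uniformly bounded on the polytube after the $k$ branch factors $\zeta(s_i)^{1/k}$ have been stripped out; because the local factor of $F$ at $p$ does not split as a product of single-variable factors, this requires a careful Euler-product comparison of $F_p(\boldsymbol{s})$ with $\prod_i (1-p^{-s_i})^{-1/k}$, with remainder uniformly $O(p^{-2\sigma})$ on the region of interest. A secondary technicality is executing the multidimensional contour deformation uniformly in $\boldsymbol{u}$, in particular near the boundary of the simplex where the Dirichlet density is singular; this is what ultimately dictates the uniform error bound $O\bigl((\log x)^{-1/k}\bigr)$.
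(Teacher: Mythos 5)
Your approach shares the paper's key analytic input — the factorization of the multivariate Dirichlet series $\mathcal{D}(\boldsymbol{s})=\prod_i\zeta(s_i)^{1/k}G(\boldsymbol{s})$ with $G$ analytic and bounded near $(1,\ldots,1)$, established by Euler-product comparison (this is Lemma~\ref{lem:growth} and Lemma~\ref{power} in the paper) — but the route you take from there is genuinely different, and it has gaps that the paper's argument is specifically engineered to avoid.

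The paper's first move is to \emph{decouple} the constraints: it replaces $d_i\leq n^{u_i}$ by $d_i\leq x^{u_i}$ and bounds the resulting discrepancy directly by elementary estimates (see (\ref{error}) and the surrounding discussion). This reduces the problem to the weighted sum $S(x_1,\ldots,x_k)=\sum_{d_1\leq x_1}\cdots\sum_{d_k\leq x_k}(\log d_1)^2\cdots(\log d_k)^2/\tau_k(d_1\cdots d_k)$, in which the summation variables run over independent ranges, so a $k$-fold Mellin inversion factorizes cleanly into a product of one-dimensional Hankel integrals (Lemma~\ref{mainlemma} and Lemma~\ref{Hankel}). You instead keep the coupled constraints $d_i\leq n^{u_i}$, so your $\xi_i$- and $s$-contours are entangled through $T=\sum u_i\xi_i$, and all $k$ branch points of the $\zeta^{1/k}$ factors move simultaneously with the $\xi_i$. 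The confluence-of-branch-points argument is plausible in spirit, but you have not shown it actually produces the claimed residue, nor that the rescaling $\xi_i=(s-1)\eta_i$ and the interchange of the Hankel and $\eta_i$-integrals are justified uniformly in $\boldsymbol{u}$; this is far from routine in $k$ dimensions.

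A second, more basic gap: you apply sharp Perron's formula $k$ times, yielding kernels $\frac{1}{\xi_i}$ and $\frac{1}{s}$ that decay too slowly for absolute convergence of a $k$-fold integral. The paper is careful to avoid exactly this, using smooth cutoffs $\phi,\psi$ with Mellin transforms decaying like $T^{j-1}/|s|^j$ (Lemma~\ref{mellin}), and additionally inserting the weights $(\log d_j)^2$ into $S$ to force the dominant contribution to come from $|s_j-1|\ll 1/\log x_j$ before removing the weights by partial summation. Without some analogue of these two devices, the interchange of summation and integration in your setup does not converge.

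Finally, the claimed optimal error $O((\log x)^{-1/k})$ requires delicate treatment of the boundary cases (some $u_i$ near $0$ or $\sum u_i$ near $1$) where the Dirichlet density is singular; in the paper this is handled by the explicit reductions around (\ref{replace}) and (\ref{repetition}), and by integrating by parts in (\ref{magic}). You flag this as a ``secondary technicality,'' but it is precisely where the rate of convergence is won or lost, and your contour setup gives no visible handle on it.

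So: same starting point (the fractional-power factorization of the multivariate Dirichlet series), but a different and substantially more fragile execution; the decoupling step, the smooth cutoffs, and the log-weights are not cosmetic in the paper but are what make the multidimensional contour argument go through.
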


The error term here is optimal if full uniformity in $u_1,\ldots,u_{k-1}$ is required. Indeed, if we choose
$u_1=\cdots=u_{k-2}=\frac{1}{k}, u_{k-1}=0,$ then one can show that the left-hand side of (\ref{fundamental}) is of order $(\log x)^{-\frac{1}{k}}$ using \cite[Th\'eor\`eme~ T]{tenenbaum1980lois} followed by partial summation.

\begin{remark}
Instead of using the logarithmic scale, one may also study localized factorization of integers, say for instance the quantity
\begin{align*}
H^{k}(x, \boldsymbol{y}, \boldsymbol{z}):=
 \left|\left\{ n \leq x \ : 
 \begin{array}{c}
  \text{there exists $(d_1,\ldots,d_{k-1}) \in \mathbb{N}^{k-1}$ such that } \\
  \text{$d_1\cdots d_{k-1} | n$ and $y_i<d_i \leq z_i$ for $i=1,\ldots,k-1$}
 \end{array}
 \right\}\right|,
\end{align*}
which was discussed in \cite{koukoulopoulos2010localized}.
\end{remark}

Note that Theorem \ref{thm:dirichlet} implies that for any axis-parallel rectangle $R \subseteq \Delta^{k-1},$ we have
\begin{align*}
\frac{1}{x}\sum_{n \leq x}\mathbb{P}
\left( \boldsymbol{D}_n \in R \right)
=\int_{R} dF
+O\left(\frac{1}{(\log x)^{\frac{1}{k}}}\right).
\end{align*}
Since every Borel subset of the simplex can be approximated by finite unions of such rectangles, the following corollary is an immediate consequence of Theorem \ref{thm:dirichlet}.

\begin{corollary} \label{cor1}
Let $k \geq 2$ be a fixed integer. For $x \geq 1,$ let $n$ be a random integer chosen uniformly from $[1,x]$ and $(d_1,\ldots,d_k)$ be a random $k$-tuple chosen uniformly from the set of all possible factorization $\{(m_1,\ldots,m_k) \in \mathbb{N}^k\, : \, n=m_1\cdots m_k\}.$ Then we have the convergence in distribution
\[\left(\frac{\log d_1}{\log n}, \ldots, \frac{\log d_k}{\log n}\right) \xrightarrow[]{d}
\mathrm{Dir}\left(\frac{1}{k},\ldots,\frac{1}{k} \right)\]
as $x \to \infty.$
\end{corollary}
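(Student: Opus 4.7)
The plan is to deduce Corollary \ref{cor1} as a soft consequence of Theorem \ref{thm:dirichlet}. Let $\mu_x$ denote the law on $\Delta^{k-1}$ of the random vector $\boldsymbol{D}_n$ under the joint uniform choice of $n \in [1,x]\cap\mathbb{Z}$ and of factorization, so that $\mu_x(A) = \lfloor x \rfloor^{-1}\sum_{n \leq x}\mathbb{P}(\boldsymbol{D}_n \in A)$ for every Borel $A \subseteq \Delta^{k-1}$, and let $\mu := \mathrm{Dir}(1/k,\ldots,1/k)$ be the target law. The convergence in distribution asserted by the corollary is exactly weak convergence $\mu_x \Rightarrow \mu$ as $x \to \infty$.

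First I would observe that Theorem \ref{thm:dirichlet}, after projecting the simplex onto its first $k-1$ coordinates, is verbatim the pointwise statement $F_{\mu_x}(u_1,\ldots,u_{k-1}) \to F(u_1,\ldots,u_{k-1})$ with uniform rate $O((\log x)^{-1/k})$ on the slab $\{u_i \geq 0,\ \sum_i u_i \leq 1\} \subseteq \mathbb{R}^{k-1}$, while off this slab either both sides are constant or equal to the total mass, so convergence is trivial there as well. Since each Dirichlet parameter $1/k$ is strictly positive, the exponents $1/k - 1$ in the density
\[f(t_1,\ldots,t_k) = \frac{1}{\Gamma(1/k)^k}\prod_{i=1}^{k}t_i^{1/k-1}\]
exceed $-1$, so $f$ is integrable on $\Delta^{k-1}$ despite blowing up at the $k$ vertices; in particular $\mu$ is absolutely continuous with respect to $(k-1)$-dimensional Lebesgue measure on the simplex, and the distribution function $F$ is continuous on its whole domain. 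The classical Helly--Bray criterion---pointwise convergence of distribution functions at every continuity point of a continuous limiting distribution function implies weak convergence of the underlying probability measures on $\mathbb{R}^{k-1}$---then yields $\mu_x \Rightarrow \mu$.

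For a more hands-on finish along the lines indicated in the paragraph preceding the corollary, one can instead combine Theorem \ref{thm:dirichlet} with a finite inclusion-exclusion over the $2^{k-1}$ corners to upgrade it to $\mu_x(R) \to \mu(R)$ for every half-open axis-parallel rectangle $R \subseteq \Delta^{k-1}$, exhaust an arbitrary open subset of $\Delta^{k-1}$ from within by a countable disjoint union of such rectangles, and invoke the Portmanteau theorem in its open-set formulation to conclude. I do not anticipate any genuine obstacle: the substantive content of the corollary lies entirely in Theorem \ref{thm:dirichlet}, and the remainder is routine measure theory whose only mildly delicate point is the integrability of $f$ at the vertices of the simplex, which is precisely what the condition $1/k > 0$ guarantees.
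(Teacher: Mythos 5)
Your backup argument---inclusion--exclusion over the $2^{k-1}$ corners of each axis-parallel rectangle $R\subseteq\Delta^{k-1}$, whose corners all lie in the slab $\{u_j\geq 0,\ \sum_j u_j\leq 1\}$ precisely because $R$ lies inside the simplex, followed by exhaustion of open subsets of the simplex and the open-set form of the Portmanteau theorem---is correct and is essentially the paper's own one-line deduction (``every Borel subset of the simplex can be approximated by finite unions of such rectangles''). Your observation that integrability of the Dirichlet density at the vertices (because $\tfrac1k-1>-1$) makes the limiting measure absolutely continuous is exactly the point that kills boundary contributions.

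Your primary Helly--Bray route, however, has a genuine gap once $k\geq 3$. You assert that off the slab ``either both sides are constant or equal to the total mass.'' That covers points with some $u_j<0$ (both CDFs are $0$) and points with all $u_j\geq 1$ (both are $1$), but it misses the region $\{u\in(0,1)^{k-1}:\ \sum_j u_j>1\}$, which is nonempty for $k\geq 3$. There the limiting CDF $F$ is strictly between $0$ and $1$ and depends nontrivially on $u$; for instance with $k=3$ and $u=(0.6,0.7)$ one has, since $\{D^{(1)}>0.6\}\cap\{D^{(2)}>0.7\}=\emptyset$,
\begin{align*}
F(0.6,0.7)=\mathbb P(D^{(1)}\leq 0.6)+\mathbb P(D^{(2)}\leq 0.7)-1,
\end{align*}
which is neither locally constant nor the total mass. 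Theorem~\ref{thm:dirichlet} does not directly give pointwise convergence of the empirical CDF at such $u$: the inclusion--exclusion over the corners of $\prod_j[0,u_j]$ produces corners outside the slab, and the single-variable marginals $\mathbb P(D^{(j)}\leq u_j)$ cannot be extracted from Theorem~\ref{thm:dirichlet} by sending the other $u_i$'s to $1$, since that again leaves the slab. This can be patched (for example by a staircase approximation with rectangles inside the simplex, or by a tightness/Prokhorov argument after verifying that CDF values on the slab determine a measure supported on the compact simplex), but as written the Helly--Bray step is not self-contained, and the rectangle/Portmanteau route should be regarded as the actual proof.
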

It is a general phenomenon that the ``anatomy'' of polynomials or permutations is essentially the same as that of integers (see \cite{granville2008anatomy}, \cite{MR3966460}), and the main theorem here is no exception. 
In the realm of polynomials, the following theorem serves as the counterpart to 
Theorem \ref{thm:dirichlet}.

\begin{theorem} \label{thm:dirichlet2} Let $k\geq 2$ be a fixed integer and $q$ be a fixed prime power. Then uniformly for $n \geq 1$ and $u_1,\ldots,u_{k-1}\geq 0$ satisfying $u_1+\cdots+u_{k-1}\leq 1$, we have
\begin{align}
\frac{1}{q^n}\sum_{F \in \mathcal{M}_q(n)}\frac{1}{\tau_k(F)}\underset{D_1\cdots D_{k-1} \mid F}{\sum_{\substack{D_1 \in \mathcal{M}_q\\\deg D_1  \leq nu_1}}\cdots
\sum_{\substack{D_{k-1} \in \mathcal{M}_q\\\deg D_{k-1}  \leq nu_{k-1}}}}1 
=F(u_1,\ldots,u_{k-1})
+O\left(n^{-\frac{1}{k}} \right),
\end{align}
where the notations are defined in 
Section \ref{section2}.
\end{theorem}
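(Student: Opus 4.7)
The plan is to transport the strategy of Theorem \ref{thm:dirichlet} to the polynomial setting, where the Mellin-Perron approach becomes coefficient extraction, and the Riemann zeta is replaced by the rational function $\zeta_q(u)=(1-qu)^{-1}$. Writing $n_i:=\deg D_i$, the quantity on the left-hand side (multiplied by $q^n$) equals
\begin{align*}
S:=\sum_{\substack{0\leq n_i\leq nu_i\\i=1,\ldots,k-1}}\,\bigl[u_1^{n_1}\cdots u_{k-1}^{n_{k-1}}u_k^{n-n_1-\cdots-n_{k-1}}\bigr]\,H(u_1,\ldots,u_k),
\end{align*}
where
\begin{align*}
H(u_1,\ldots,u_k):=\sum_{F\in\mathcal{M}_q}\frac{1}{\tau_k(F)}\sum_{D_1\cdots D_k=F}\prod_{i=1}^{k}u_i^{\deg D_i}.
\end{align*}
By Cauchy's formula, after summing the finite geometric progressions in each $n_i$, I get
\begin{align*}
S=\frac{1}{(2\pi i)^{k}}\oint\cdots\oint\frac{H(u_1,\ldots,u_k)}{u_1\cdots u_{k-1}\,u_k^{n+1}}\prod_{i=1}^{k-1}\frac{1-(u_k/u_i)^{\lfloor nu_i\rfloor+1}}{1-u_k/u_i}\,du_1\cdots du_k,
\end{align*}
taken over small circles $|u_i|=\rho_i$ with $\rho_i<1/q$ and $\rho_k>\rho_i$ (so that each geometric sum converges on the contour).

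Next, I analyse $H$ through its Euler product over monic irreducibles $P$: setting $v_i:=u_i^{\deg P}$, the local factor is
\begin{align*}
H_P(u_1,\ldots,u_k)=\sum_{m\geq 0}\binom{m+k-1}{k-1}^{\!-1}\sum_{j_1+\cdots+j_k=m}\prod_{i=1}^{k}v_i^{j_i},
\end{align*}
and a direct comparison with $\prod_{i}(1-v_i)^{-1/k}$ yields the factorisation
\begin{align*}
H(u_1,\ldots,u_k)=\prod_{i=1}^{k}(1-qu_i)^{-1/k}\cdot K(u_1,\ldots,u_k),
\end{align*}
where $K$ is holomorphic and bounded in a polydisc $|u_i|<q^{-1}+\delta$. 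This is the function-field analogue of the factorisation of the Dirichlet series underlying Theorem \ref{thm:dirichlet}; the only singularity in the polydisc $|u_i|\leq q^{-1}$ is the branch point at $u_i=q^{-1}$, which is exactly the simple pole of $\zeta_q$ endowed with the fractional exponent $-1/k$.

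Finally I shift the contours to pass through the relevant branch points. Taking $\rho_i=q^{-1}-c/n$ and deforming each $u_i$-contour into a small Hankel contour around $u_i=q^{-1}$, the substitution $u_i=q^{-1}(1-z_i/n)$ converts $u_k^{-n}$ into $e^{z_k}(1+O(1/n))$, turns each $(1-qu_i)^{-1/k}$ into $n^{1/k}z_i^{-1/k}$, and collapses the truncated geometric sums $\bigl(1-(u_k/u_i)^{\lfloor nu_i\rfloor+1}\bigr)/(1-u_k/u_i)$ into $\int_0^{u_i}e^{-(z_i-z_k)t}\,dt$ in the limit. A standard Mellin-Barnes evaluation then identifies the leading term with
\begin{align*}
\int_0^{u_1}\cdots\int_0^{u_{k-1}}f(t_1,\ldots,t_{k-1},1-t_1-\cdots-t_{k-1})\,dt_1\cdots dt_{k-1}=F(u_1,\ldots,u_{k-1}),
\end{align*}
while $K(q^{-1},\ldots,q^{-1})$ supplies exactly the normalising constant $\Gamma(1)/\Gamma(1/k)^{k}$ hidden in the Dirichlet density (a computation that can also be carried out by specialising at $u_i=1/k$ and cross-checking against $\sum_{n\leq x}\tau_k(n)^{-1}$).

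I expect the main obstacle to be the uniform control of the error $n^{-1/k}$ across the entire simplex, in particular when one or several $u_i$ degenerate to $0$ or to $1-\sum_{j\neq i}u_j$; there the Hankel contour for the $i$-th variable must be kept away from the neighbouring singularities, and the geometric-sum kernel $(1-(u_k/u_i)^{\lfloor nu_i\rfloor+1})/(1-u_k/u_i)$ must be bounded carefully. Everything else, including the tail estimates on the Hankel contours and the analyticity of $K$, is softer and is the function-field analogue of the contour-shifting lemmas already used for Theorem \ref{thm:dirichlet}.
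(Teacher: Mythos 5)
Your approach is essentially the same as the paper's, which defines the function-field multiple Dirichlet series $\mathcal{D}_{\mathbb{F}_q[x]}(s_1,\ldots,s_k)$ and says to ``follow exactly the same steps'' as in Theorem~\ref{thm:dirichlet}; your setup is just the reparametrisation $u_j=q^{-s_j}$, under which Mellin inversion on vertical lines becomes Cauchy coefficient extraction on circles, and the role of $\zeta(s)\sim(s-1)^{-1}$ is played by $(1-qu)^{-1}$. You also correctly notice the two simplifications the paper alludes to: the sum is over $F$ of fixed degree $n$, so the ``main term vs.\ $n^{u_j}<d_j\leq x^{u_j}$'' decomposition of Theorem~\ref{thm:dirichlet} is not needed, and since the circle contour is compact and $\zeta_{\mathbb{F}_q}$ has no zeros, the dampening weights $(\log d_j)^2$ and the smooth cutoffs of Lemma~\ref{mellin} can be dispensed with.

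Two corrections. First, the claim that $K(q^{-1},\ldots,q^{-1})$ supplies the constant $\Gamma(1)/\Gamma(1/k)^k$ is not right: the same telescoping as in Lemma~\ref{power} (where the leading coefficient $a_{\boldsymbol 0}$ is shown to equal $1$) gives
\begin{align*}
K(q^{-1},\ldots,q^{-1})=\prod_{P}\bigl(1-q^{-\deg P}\bigr)\sum_{m\geq 0}\binom{m+k-1}{k-1}^{-1}\binom{m+k-1}{k-1}q^{-m\deg P}=1,
\end{align*}
and the factor $\Gamma(1/k)^{-k}$ appears instead from the $k$ independent Hankel integrals $\frac{1}{2\pi i}\oint z_j^{-1/k}e^{z_j}\,dz_j=\Gamma(1/k)^{-1}$ after your substitution $u_j=q^{-1}(1-z_j/n)$ (this is the function-field shadow of Lemma~\ref{Hankel}). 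Second, the sign in the limiting kernel is off: with $\log(u_k/u_i)=(z_i-z_k)/n+O(n^{-2})$ one gets $\bigl(1-(u_k/u_i)^{\lfloor nu_i\rfloor+1}\bigr)/(1-u_k/u_i)\to n\int_0^{u_i}e^{(z_i-z_k)t}\,dt$, not with a minus sign, though this is cosmetic and depends on the orientation of the Hankel contour you fix. The remaining gap you flag (uniformity of the $O(n^{-1/k})$ error as some $u_i\to 0$ or as $u_1+\cdots+u_{k-1}\to 1$) is real, but the paper leaves exactly these computations to the reader too, so the level of detail is comparable; it would be handled by the same case analysis as in the proofs of Lemmas~\ref{lemmaI_1} and~\ref{lemmaI_2}, translated to estimates on the $z_i$-integrals.
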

\begin{corollary} 
Let $k \geq 2$ be a fixed integer and $q$ be a fixed prime power. For $n \geq 1,$ let $F$ be a random polynomial chosen uniformly from $\mathcal{M}_q(n)$ and $(D_1,\ldots,D_k)$ be a random $k$-tuple chosen uniformly from the set of all possible factorization $\{(G_1,\ldots,G_k) \in \mathcal{M}_q^k\, : \, F=G_1\cdots G_k\}.$ Then we have the convergence in distribution
\[\left(\frac{\deg D_1}{n}, \ldots, \frac{\deg D_{k}}{n}\right) \xrightarrow[]{d}
\mathrm{Dir}\left(\frac{1}{k},\ldots,\frac{1}{k} \right)\]
as $n \to \infty.$
\end{corollary}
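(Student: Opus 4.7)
The plan is to deduce this corollary from Theorem \ref{thm:dirichlet2} by the same portmanteau-type argument used to obtain Corollary \ref{cor1} from Theorem \ref{thm:dirichlet}; no new analytic input is required beyond the theorem itself.

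First, I would reinterpret the left-hand side of Theorem \ref{thm:dirichlet2} as
\[
\frac{1}{q^n}\sum_{F \in \mathcal{M}_q(n)}\mathbb{P}\left(\frac{\deg D_1}{n}\leq u_1,\ldots,\frac{\deg D_{k-1}}{n}\leq u_{k-1}\right),
\]
which is the joint distribution function of the first $k-1$ coordinates of the averaged random vector $\bigl(\frac{\deg D_1}{n},\ldots,\frac{\deg D_k}{n}\bigr)$. Theorem \ref{thm:dirichlet2} then states precisely that this distribution function converges uniformly on the simplex to the distribution function $F(u_1,\ldots,u_{k-1})$ of $\mathrm{Dir}(1/k,\ldots,1/k)$, with rate $O(n^{-1/k})$.

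Next, by taking finite differences in the $u_i$ variables, the averaged probability that the random vector lies in any axis-parallel rectangle $R \subseteq \Delta^{k-1}$ converges to $\int_R dF$ with the same error $O(n^{-1/k})$. Since the Dirichlet distribution has a continuous density on $\Delta^{k-1}$ and is therefore atomless, any open (resp.\ closed) subset of $\Delta^{k-1}$ can be approximated from below (resp.\ above) by finite disjoint unions of such rectangles with arbitrarily small Dirichlet measure error. The Portmanteau theorem then yields weak convergence of the averaged law to $\mathrm{Dir}(1/k,\ldots,1/k)$.

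There is no real obstacle, since the entire quantitative content lies in Theorem \ref{thm:dirichlet2}; the remaining step is purely measure-theoretic. The only minor point to be careful about is that the random vector $\bigl(\frac{\deg D_1}{n},\ldots,\frac{\deg D_k}{n}\bigr)$ takes values in the discrete subset of $\Delta^{k-1}$ with coordinates in $\frac{1}{n}\mathbb{Z}$, but this causes no issue when the limit is absolutely continuous.
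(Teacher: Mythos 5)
Your proposal is correct and matches the paper's implicit argument: the paper derives Corollary \ref{cor1} from Theorem \ref{thm:dirichlet} by exactly this rectangle-approximation and weak-convergence reasoning, and the polynomial corollary is presented as following from Theorem \ref{thm:dirichlet2} by the identical scheme. Nothing further is needed.
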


Similarly, in the realm of permutations, the following theorem serves as the counterpart to 
Theorem \ref{thm:dirichlet}.

\begin{theorem} \label{thm:dirichlet3} Let $k\geq 2$ be a fixed integer. Then uniformly for $n \geq 1$ and $u_1,\ldots,u_{k-1}\geq 0$ satisfying $u_1+\cdots+u_{k-1}\leq 1$, we have
\begin{align}
\frac{1}{n!}\sum_{\sigma \in S_n}\frac{1}{\tau_k(\sigma)}
\underset{{\substack{[n]=A_1 \sqcup \cdots \sqcup A_k\\ \sigma(A_i)=A_i, i=1,\ldots,k \\ 0 \leq |A_i|  \leq nu_i,  i=1,\ldots,k-1}}}{\sum\cdots\sum}1
=F(u_1,\ldots,u_{k-1})+O\left(n^{-\frac{1}{k}}\right),
\end{align}
where the notations are defined in 
Section \ref{section2}.
\end{theorem}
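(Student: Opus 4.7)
The plan is to exploit the cycle structure of permutations to reduce the left-hand side to an explicit multinomial sum involving only gamma functions, and then identify the Dirichlet density as the limit of a Riemann sum. The starting observation is that any $\sigma$-invariant subset of $[n]$ must be a union of complete cycles of $\sigma$, so an ordered decomposition $[n] = A_1 \sqcup \cdots \sqcup A_k$ with $\sigma(A_i)=A_i$ for every $i$ is nothing but an ordered $k$-coloring of the cycles of $\sigma$. Consequently $\tau_k(\sigma) = k^{c(\sigma)}$, where $c(\sigma)$ denotes the number of cycles. Interchanging the sums and parametrizing by the block sizes $(n_1,\ldots,n_k)$, I would rewrite the LHS as
\begin{align*}
\sum_{\substack{n_1+\cdots+n_k=n\\ 0\leq n_i\leq nu_i,\,i\leq k-1}}\prod_{i=1}^{k}\frac{1}{n_i!}\sum_{\sigma_i\in S_{n_i}}\frac{1}{k^{c(\sigma_i)}}.
\end{align*}

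The inner weight is evaluated by the exponential formula: since
\begin{align*}
\sum_{m\geq 0}\frac{z^m}{m!}\sum_{\sigma\in S_m}\frac{1}{k^{c(\sigma)}} = \exp\!\left(\frac{1}{k}\log\frac{1}{1-z}\right) = (1-z)^{-1/k},
\end{align*}
one has $\sum_{\sigma\in S_m}k^{-c(\sigma)} = \Gamma(m+1/k)/\Gamma(1/k)$. Substituting collapses the permutation dependence and leaves the explicit expression
\begin{align*}
\frac{1}{\Gamma(1/k)^k}\sum_{\substack{n_1+\cdots+n_k=n\\ n_i\leq nu_i,\,i\leq k-1}}\prod_{i=1}^{k}\frac{\Gamma(n_i+1/k)}{n_i!}.
\end{align*}
Invoking Stirling in the form $\Gamma(m+1/k)/m! = m^{1/k-1}(1+O(1/m))$ for $m\geq 1$ and writing $t_i=n_i/n$ exhibits this sum as a $(k-1)$-dimensional Riemann approximation to
\begin{align*}
\frac{1}{\Gamma(1/k)^k}\int_{\substack{t_1+\cdots+t_k=1\\ 0\leq t_i\leq u_i,\,i\leq k-1}}\prod_{i=1}^{k}t_i^{1/k-1}\,dt_1\cdots dt_{k-1} = F(u_1,\ldots,u_{k-1}),
\end{align*}
since the integrand is precisely the Dirichlet density $f_{\boldsymbol{\alpha}}$ with $\alpha_i=1/k$.

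The crux is obtaining the uniform error $O(n^{-1/k})$. Because the density $\prod t_i^{1/k-1}$ is singular on every coordinate face of the simplex, a direct total-variation bound on the Riemann-sum discrepancy fails, and crudely balancing a bulk Stirling error of order $M^{-1}$ against a boundary tail of order $(M/n)^{1/k}$ would yield only $O(n^{-1/(k+1)})$. To reach the sharp rate I would stratify the index set according to the subset $I\subseteq\{1,\ldots,k\}$ of coordinates lying in a thin boundary strip $n_i<M$ (with $M$ eventually taken bounded), and on each stratum exploit the product structure to collapse the free coordinates via the convolution identity $[z^m](1-z)^{-|I^c|/k} = \Gamma(m+|I^c|/k)/(m!\,\Gamma(|I^c|/k))$. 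The resulting one-dimensional tail sums are then matched against the corresponding strips of the Dirichlet integral using a second-order expansion $\Gamma(m+1/k)/m! = m^{1/k-1} + O(m^{1/k-2})$, and the bulk Riemann-sum error is controlled by multivariate Euler--Maclaurin once the singular faces have been removed. The main obstacle is the bookkeeping needed for the discrete and continuous boundary contributions to cancel face by face, so that the final error reflects only the measure of the boundary strip $\{t_i\leq 1/n\}$ weighted by the singular density, which is exactly of size $n^{-1/k}$.
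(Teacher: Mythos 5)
Your reduction to block sizes, the observation that $\sigma$-invariance forces $A_i$ to be a union of cycles so that $\tau_k(\sigma)=k^{c(\sigma)}$, the exponential-formula evaluation $\frac{1}{m!}\sum_{\sigma\in S_m}k^{-c(\sigma)}=\binom{m+1/k-1}{m}$, the Stirling asymptotic $\binom{m+1/k-1}{m}=\frac{m^{1/k-1}}{\Gamma(1/k)}(1+O(1/m))$, and the recognition of the resulting sum as a Riemann sum for $F(u_1,\ldots,u_{k-1})$ — all of this is exactly the paper's route (the paper derives the inner-sum identity via Stirling numbers of the first kind rather than the exponential formula, a purely cosmetic difference).

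Where you diverge is in the error analysis, and here you have talked yourself into unnecessary difficulty. You assert that balancing a bulk Stirling error of size $M^{-1}$ against a boundary tail of size $(M/n)^{1/k}$ caps the rate at $n^{-1/(k+1)}$, and you propose face-by-face cancellation between the discrete and continuous boundary strips to recover $n^{-1/k}$. No such cancellation is needed. The point you are missing is that the relative Stirling error $O(1/m_j)$ must be weighted by the density and \emph{summed}, not bounded by its supremum over $m_j\geq M$. Doing so, the $j$-th contribution becomes
\begin{align*}
\frac{1}{n}\,\frac{1}{n^{k-1}}\sum_{m_j}\Bigl(\frac{m_j}{n}\Bigr)^{1/k-2}\prod_{i\neq j}\Bigl(\frac{m_i}{n}\Bigr)^{1/k-1}
\ \ll\ \frac{1}{n}\int_{1/n}^{1-1/n}t_j^{1/k-2}(1-t_j)^{-1/k}\,dt_j\ \ll\ n^{-1/k},
\end{align*}
already at the trivial cutoff $M=1$. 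Likewise the Riemann-sum-versus-integral discrepancy, estimated cell-by-cell via a first-order Taylor expansion, gives the same weighted sum and hence the same $O(n^{-1/k})$. The discrete boundary contribution from $m_i=0$ for $i\in I$ is $O(n^{-|I|/k})$, and the continuous strip $\{t_j<1/n\}$ or $\{t_j>u_j-1/n\}$ contributes $O(n^{-1/k})$; both are simply bounded, never matched against each other. Your stratification by the subset of small coordinates is in the paper (with $M=1$), but the delicate second-order matching you anticipate, and the claimed obstruction at $n^{-1/(k+1)}$, do not arise.
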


\begin{corollary} 
Let $k \geq 2$ be a fixed integer. For $n \geq 1,$ let $\sigma$ be a random permutation chosen uniformly from $S_n$ and $(A_1,\ldots, A_k)$ be a random $k$-tuple chosen uniformly from the set of all possible $\sigma$-invariant decomposition $\{(B_1,\ldots, B_k) \, : \, [n]=B_1\sqcup \cdots \sqcup B_k, \sigma(B_i)=B_i  \text{ for }i=1,\ldots, k\}.$ Then we have the convergence in distribution
\[\left(\frac{|A_1|}{n},\ldots, \frac{|A_k|}{n}\right) \xrightarrow[]{d}
\mathrm{Dir}\left(\frac{1}{k},\ldots,\frac{1}{k} \right)\]
as $n \to \infty.$
\end{corollary}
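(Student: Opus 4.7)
The plan is first to observe that a $\sigma$-invariant decomposition $[n]=A_1\sqcup\cdots\sqcup A_k$ corresponds bijectively to a $k$-coloring of the cycles of $\sigma$, with $A_i$ the union of color-$i$ cycles. In particular $\tau_k(\sigma)=k^{\omega(\sigma)}$, where $\omega(\sigma)$ denotes the number of cycles. Applying the exponential formula to the cycle EGF with color weight $1/k$ per cycle and marker $y_i$ for color-$i$ elements then yields the clean identity
\begin{equation*}
\sum_{n\geq 0}\frac{x^n}{n!}\sum_{\sigma\in S_n}\frac{1}{k^{\omega(\sigma)}}\sum_{\substack{[n]=A_1\sqcup\cdots\sqcup A_k\\ \sigma(A_i)=A_i}}\prod_{i=1}^k y_i^{|A_i|}
\;=\; \prod_{i=1}^k (1-xy_i)^{-1/k}.
\end{equation*}
Since $[x^n y_1^{n_1}\cdots y_k^{n_k}]\prod_i(1-xy_i)^{-1/k}$ equals $\prod_i\binom{n_i+1/k-1}{n_i}$ when $\sum_i n_i=n$ and vanishes otherwise, the left-hand side of the theorem reduces to the \emph{exact} finite sum
\begin{equation*}
\sum_{\substack{n_1,\ldots,n_{k-1}\geq 0\\ n_i\leq \lfloor n u_i\rfloor\\ n_1+\cdots+n_{k-1}\leq n}}\prod_{i=1}^k \binom{n_i+\tfrac{1}{k}-1}{n_i},\qquad n_k:=n-\sum_{i=1}^{k-1}n_i,
\end{equation*}
bypassing entirely the multidimensional contour-integration machinery needed in the integer case.

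What remains is an asymptotic comparison with the Dirichlet integral. Using the standard estimate $\binom{m+\tfrac{1}{k}-1}{m}=\Gamma(1/k)^{-1}m^{1/k-1}\bigl(1+O(1/m)\bigr)$ for $m\geq 1$ (handling $m=0$ separately) and setting $t_i=n_i/n$, each summand is approximately $\Gamma(1/k)^{-k}\prod_i n_i^{1/k-1}$. Since $\sum_i\tfrac{1}{k}=1$, the prefactor $\Gamma(1/k)^{-k}$ is precisely the Dirichlet normalizing constant, and the substitution $t_i=n_i/n$ turns the sum into a Riemann sum for
\begin{equation*}
\frac{1}{\Gamma(1/k)^k}\int_{\substack{0\leq t_i\leq u_i\\ \sum_{i=1}^{k-1} t_i\leq 1}}\prod_{i=1}^k t_i^{\frac{1}{k}-1}\,dt_1\cdots dt_{k-1} \;=\; F(u_1,\ldots,u_{k-1}).
\end{equation*}

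The main technical obstacle will be the error estimate, since $\prod_i t_i^{1/k-1}$ is integrable but singular along each codimension-one face $\{t_i=0\}$, so the naive Riemann-sum bound is too weak. The dominant discrepancy comes from the boundary layer in which some $n_i$ remains of bounded size as $n\to\infty$. Already in one variable, $\int_{0}^{1/n} t^{1/k-1}\,dt = k\,n^{-1/k}$ quantifies the natural size of this defect, and Euler--Maclaurin applied to $\sum_{m=1}^{M} m^{1/k-1}$ shows that after integrating out the remaining $k-2$ coordinates this boundary contribution propagates into an overall error of exactly the order $n^{-1/k}$; configurations in which two or more of the $n_i$ are simultaneously bounded live on higher-codimension corners and contribute strictly less. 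Summing these contributions yields the optimal bound $O(n^{-1/k})$.
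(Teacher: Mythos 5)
Your proposal is essentially the paper's own elementary proof of Theorem \ref{thm:dirichlet3} (from which the corollary follows at once by approximating Borel subsets of $\Delta^{k-1}$ by rectangles): both reduce the left-hand side exactly to the finite sum $\sum_{m_i\leq nu_i}\prod_{i=1}^k\binom{m_i+1/k-1}{m_i}$ and then compare it with the Dirichlet integral via a Riemann-sum argument, with the $O(n^{-1/k})$ error dominated by the codimension-one boundary layers where some $m_i$ is bounded. The only difference is cosmetic: you obtain the reduction through the exponential formula for cycle-colored permutations, i.e.\ $\sum_n\frac{x^n}{n!}\sum_\sigma\alpha^{c(\sigma)}=(1-x)^{-\alpha}$, whereas the paper interchanges the sum over $\sigma$ with the sum over ordered set partitions $(A_1,\ldots,A_k)$, factors $\tau_k(\sigma)^{-1}=\prod_i\tau_{1/k}(\sigma|_{A_i})$, and invokes the identity $\frac{1}{m!}\sum_{\sigma\in S_m}\alpha^{c(\sigma)}=\binom{m+\alpha-1}{m}$ proved directly from the explicit Stirling-number/falling-factorial formula — two routes to the same key identity.
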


In Section \ref{section6}, we will model the Dirichlet distribution with arbitrary parameters by assigning probability weights which are not necessarily uniform to each integer and to each factorization. Then, as we will see, most of the results in the literature about the distribution of divisors in logarithmic scale are direct consequences of Theorem \ref{thmgeneral}, which is a generalization of Theorem \ref{thm:dirichlet}.

\section{Notation} \label{section2}

Throughout the paper, we shall adopt the following list of notations:
\begin{itemize}
\item we say $f(x)=O(g(x))$ or $f(x) \ll g(x)$ if there exists a constant $C>0$ which might depend on $k,q, \boldsymbol{\alpha}, \boldsymbol{\beta}, \boldsymbol{c}, \boldsymbol{\delta}$ such that $|f(x)| \leq C \cdot g(x)$ whenever $x >x_0$ for some $x_0>0;$


\item $[n]:=\{1,2,\ldots, n\};$

\item $\tau_k(n):=|\{(d_1,\ldots, d_k) \in \mathbb{N}^k \, : \, n=d_1\cdots d_{k} \} |$ and $\tau(n):=\tau_2(n);$

\item $\mathcal{M}_q:=\{F \in \mathbb{F}_q[x] \, : \, F \text{ is monic}\};$

\item $\mathcal{M}_q(n):=\{F \in \mathcal{M}_q \, : \, \deg{F}=n\};$


\item $\tau_k(F):=|\{(D_1,\ldots, D_k) \in \mathcal{M}_q^k \, : \, F=D_1\cdots D_{k} \} |;$

\item $S_n$ denotes the group of permutations on $[n];$

\item $c(\sigma)$ denotes the number of disjoint cycles of the permutation $\sigma;$

\item $\tau_{\alpha}(\sigma):={\alpha}^{c(\sigma)};$

\item $\left[{n \atop k}\right]:=|\{\sigma \in S_n \, : \, c(\sigma)=k\}|$ denote (unsigned) Stirling numbers of the first kind.

\end{itemize}

\section{Properties of $\mathcal{D}(s_1,\ldots, s_k)$}

Both \cite{tenenbaum1980lois} and \cite{nyandwi2014distribution} deal with the divisors one by one using  \cite[Th\'eor\`eme~ T]{tenenbaum1980lois} followed by partial summation. However, as $k$ gets larger, it is increasingly laborious to achieve full uniformity as well as the optimal rate of convergence, especially when one of the $u$'s is small or $u_1+\cdots+u_{k-1}$ is close to $1$. Instead, we would like to apply Mellin's inversion formula to (the second derivative of) the multiple Dirichlet series $\mathcal{D}(s_1,\ldots, s_k)$ defined below, which allows a more symmetric approach to the problem so that all the divisors can be handled simultaneously. We first establish a few properties of the multiple Dirichlet series that are essential to the proof of Theorem \ref{thm:dirichlet}.

\begin{lemma}\label{lem:interchange} Let $\mathcal{D}(s_1,\ldots,  s_k)$ denote the multiple Dirichlet series 
\begin{align*} 
\sum_{n_1=1}^{\infty}\cdots
\sum_{n_k=1}^{\infty}\frac{\tau_k(n_1\cdots n_k)^{-1}}{n_1^{s_1}\cdots n_k^{s_k}}.
\end{align*}
Then $\mathcal{D}(s_1,\ldots, s_k)$ converges absolutely in the domain
\begin{align*}
\Omega:=\{(s_1,\dots,s_k)\in \mathbb{C}^k \, : \, \text{$\Re(s_j)>1$ {\normalfont for} $j=1,\dots,k$}\}
\end{align*}
and uniformly on any compact subset of $\Omega.$ In particular, $\mathcal{D}(s_1,\dots,s_k)$ is an analytic function of $k$ variables in $\Omega.$
\end{lemma}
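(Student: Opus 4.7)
The plan is to reduce everything to the trivial lower bound $\tau_k(m)\geq 1$, valid for every $m \geq 1$ because $(m,1,\ldots,1)$ is always an ordered $k$-factorization of $m$. Consequently $\tau_k(n_1\cdots n_k)^{-1}\leq 1$ pointwise, and the series majorant splits into a product of Riemann zeta functions.

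More precisely, for any $(s_1,\ldots,s_k)\in\Omega$ I would bound
\begin{align*}
\sum_{n_1,\ldots,n_k\geq 1}\left|\frac{\tau_k(n_1\cdots n_k)^{-1}}{n_1^{s_1}\cdots n_k^{s_k}}\right|
\leq \sum_{n_1,\ldots,n_k\geq 1}\frac{1}{n_1^{\Re(s_1)}\cdots n_k^{\Re(s_k)}}
=\prod_{j=1}^{k}\zeta(\Re(s_j)),
\end{align*}
which is finite since each $\Re(s_j)>1$. This already gives absolute convergence at every point of $\Omega$. Given a compact $K\subset\Omega$, I would pick $\sigma_0:=\min_{(s_1,\ldots,s_k)\in K}\min_{j}\Re(s_j)$, which by compactness satisfies $\sigma_0>1$. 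Then the same majorant is bounded uniformly on $K$ by the constant $\zeta(\sigma_0)^k$, and in fact the Weierstrass $M$-test with $M_{\boldsymbol{n}}=\prod_j n_j^{-\sigma_0}$ delivers uniform convergence of the multiple series on $K$.

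For the analyticity statement, each partial sum $\sum_{n_1,\ldots,n_k\leq N}\tau_k(n_1\cdots n_k)^{-1}n_1^{-s_1}\cdots n_k^{-s_k}$ is an entire function of $(s_1,\ldots,s_k)$ (a finite $\mathbb{C}$-linear combination of exponentials in the $s_j$'s). Since the limit is uniform on compact subsets of $\Omega$, the several-variable Weierstrass theorem (equivalently, Osgood's lemma together with Morera's theorem applied slicewise in each $s_j$) yields that $\mathcal{D}(s_1,\ldots,s_k)$ is holomorphic on $\Omega$. There is no real obstacle here; the content of the lemma is essentially the triviality $\tau_k\geq 1$, and the rest is a standard application of dominated convergence and the holomorphy of uniform limits.
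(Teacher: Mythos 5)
Your proof is correct and follows essentially the same route as the paper: both rely on the trivial bound $\tau_k(m)\geq 1$ to majorize the series by $\zeta(\Re(s_1))\cdots\zeta(\Re(s_k))$. You merely spell out the compactness and Weierstrass/Osgood steps that the paper leaves implicit.
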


\begin{proof}
Let $\sigma_j:=\Re(s_j)$ for $j=1,\ldots,k.$ Then, since
\begin{align*}
\sum_{n_1=1}^{\infty}\cdots
\sum_{n_k=1}^{\infty}\left|\frac{\tau_k(n_1\cdots n_k)^{-1}}{n_1^{s_1}\cdots n_k^{s_k}}\right| \leq \zeta(\sigma_1)\cdots\zeta(\sigma_k) <\infty,
\end{align*}
the lemma follows.
\end{proof}


\begin{lemma}\label{lem:euler}
The multiple Dirichlet series $\mathcal{D}(s_1,\dots,s_k)$ can be expressed as the Euler product
\begin{align*}
\prod_p\sum_{v_1=0}^{\infty}\cdots\sum_{v_k=0}^{\infty}{v_1+\cdots+v_k+k-1 \choose {k-1}}^{-1}
p^{-(v_1s_1+\cdots+v_ks_k)}
\end{align*}
in the domain $\Omega$ defined above.
\end{lemma}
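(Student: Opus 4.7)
The plan is standard: exploit the multiplicativity of $n \mapsto \tau_k(n)^{-1}$ together with the absolute convergence in $\Omega$ established in Lemma \ref{lem:interchange}, in order to regroup the defining sum prime by prime.

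First, I would recall that $\tau_k$ is multiplicative and that on a prime power
\[
\tau_k(p^v) = \binom{v+k-1}{k-1},
\]
since ordered factorizations $p^v = p^{v_1}\cdots p^{v_k}$ correspond bijectively to weak compositions $v_1+\cdots+v_k=v$ with $v_i \geq 0$. Writing $v_p(n_j)$ for the $p$-adic valuation of $n_j$, multiplicativity then yields
\[
\tau_k(n_1\cdots n_k)^{-1} = \prod_p \binom{v_p(n_1)+\cdots+v_p(n_k)+k-1}{k-1}^{-1},
\]
and of course $n_j^{s_j} = \prod_p p^{v_p(n_j)\, s_j}$. Consequently the general term of $\mathcal{D}(s_1,\ldots,s_k)$ factorises over primes as a product of local weights depending only on the tuple $(v_p(n_1),\ldots,v_p(n_k))$.

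Finally, the parametrisation $(n_1,\ldots,n_k) \leftrightarrow (v_p(n_1),\ldots,v_p(n_k))_p$ (a finitely supported family of $k$-tuples indexed by primes) together with the absolute convergence from Lemma \ref{lem:interchange} allows me to apply Fubini/Tonelli to interchange the sum over $\mathbb{N}^k$ with the product over primes, yielding the claimed Euler product whose local factor at $p$ is
\[
\sum_{v_1=0}^{\infty}\cdots\sum_{v_k=0}^{\infty}\binom{v_1+\cdots+v_k+k-1}{k-1}^{-1} p^{-(v_1 s_1+\cdots+v_k s_k)}.
\]
The only potentially delicate point is justifying the interchange of sum and product, but with absolute convergence in hand this is entirely routine; I do not anticipate any substantive analytic obstacle beyond the bookkeeping.
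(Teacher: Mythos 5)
Your proposal is correct and matches the paper's argument in substance: both hinge on $\tau_k(p^v)=\binom{v+k-1}{k-1}$, the multiplicativity of $\tau_k(n_1\cdots n_k)^{-1}$ in the vector variable, and the absolute convergence from Lemma \ref{lem:interchange} to justify regrouping the sum by prime. The paper makes the final interchange explicit by truncating the Euler product at $p\leq y$, identifying the truncated product with the sum over $y$-smooth $k$-tuples, and bounding the tail by $\sum_j \prod_{i\neq j}\zeta(\sigma_i)\sum_{n_j>y}n_j^{-\sigma_j}\to 0$; your appeal to ``Fubini/Tonelli'' is shorthand for this same finite-truncation-plus-tail estimate rather than a literal application of Fubini, so if you were to write it out you would end up with exactly the paper's computation.
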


\begin{proof}

Let $y \geq 2$ and $\sigma_j:=\Re(s_j)$ for $j=1,\dots,k.$ Then, since
\begin{align*}
\sum_{v_1=0}^{\infty}\cdots\sum_{v_k=0}^{\infty}{v_1+\cdots+v_k+k-1 \choose {k-1}}^{-1}
p^{-(v_1\sigma_1+\cdots+v_k\sigma_k)}
\leq \left(1-\frac{1}{p^{\sigma_1}} \right)^{-1}
\cdots \left(1-\frac{1}{p^{\sigma_k}} \right)^{-1}<\infty,
\end{align*}
the finite product
\begin{align*}
\prod_{p \leq y}\sum_{v_1=0}^{\infty}\cdots\sum_{v_k=0}^{\infty}{v_1+\cdots+v_k+k-1 \choose {k-1}}^{-1}
p^{-(v_1s_1+\cdots+v_ks_k)}
\end{align*}
is well-defined.\\
Let $S(y):=\{n\geq 1 \,: \, p|n \text{ implies } p \leq y\}$ be the set of $y$-smooth numbers. Then, since 
\begin{align*}
\tau_k(p^v)={v+k-1 \choose k-1},
\end{align*}
we have
\begin{align*}
&\left|\prod_{p \leq y}\sum_{v_1=0}^{\infty}\cdots\sum_{v_k=0}^{\infty}{v_1+\cdots+v_k+k-1 \choose {k-1}}^{-1}
p^{-(v_1s_1+\cdots+v_ks_k)}- \mathcal{D}(s_1,\dots,s_k) \right|\\
=&\left|\sum_{n_1 \in S(y)}\cdots\sum_{n_k \in S(y)}\frac{\tau_k(n_1\cdots n_k)^{-1}}{n_1^{s_1}\cdots n_k^{s_k}} - \mathcal{D}(s_1,\dots,s_k)\right| 
\leq \sum_{j=1}^k \prod_{\substack{i=1\\i\neq j}}^k \zeta(\sigma_i)
\sum_{n_j>y}\frac{1}{n_j^{\sigma_j}}.
\end{align*}
The lemma follows by letting $y \to \infty.$
\end{proof}

\begin{lemma}\label{lem:growth}
For $j=1,\ldots,k,$ let $R_j \subseteq \left\{ s_j \in \mathbb{C} \,:\, \Re(s_j)  > \frac{3}{4}, \, |\Im(s_j)| > \frac{1}{4} \right\}$ be a 
zero-free region for $\zeta(s_j).$
Then the multiple Dirichlet series $\mathcal{D}(s_1,\dots,s_k)$ can be continued analytically to the domain $\prod_{j=1}^k R_j.$ Moreover, we have the bound
\begin{align} \label{eq:middlebound}
\mathcal{D}(s_1,\dots,s_k) \ll  |\zeta(s_1)|^{\frac{1}{k}}\cdots|\zeta(s_k)|^{\frac{1}{k}}.
\end{align}
\end{lemma}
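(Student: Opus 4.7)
The plan is to exhibit a factorization
\begin{equation*}
\mathcal{D}(s_1,\ldots,s_k) = \prod_{j=1}^k \zeta(s_j)^{1/k} \cdot G(s_1,\ldots,s_k),
\end{equation*}
initially valid on $\Omega$, where $G$ is an Euler product that extends holomorphically and is uniformly bounded on a larger polydisc containing $\prod_{j=1}^k R_j$. Granted this, the analytic continuation and bound both follow at once: the zero-free hypothesis together with $|\Im(s_j)| > 1/4$ (which excludes $s_j = 1$, so that each $R_j$ avoids both the zeros and the pole of $\zeta$) makes $\zeta(s_j)^{1/k} := \exp(\tfrac{1}{k}\log \zeta(s_j))$ a well-defined analytic function on $R_j$, and taking absolute values gives $|\mathcal{D}| = |G|\prod_j |\zeta(s_j)|^{1/k} \ll \prod_j |\zeta(s_j)|^{1/k}$.

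To find $G$, the starting point is the beta-integral identity
\begin{equation*}
\binom{v_1+\cdots+v_k+k-1}{k-1}^{-1} = (k-1)\int_0^1 (1-t)^{k-2}\, t^{v_1+\cdots+v_k}\, dt.
\end{equation*}
Substituting this into the local Euler factors from Lemma \ref{lem:euler} and summing the geometric series in each $v_j$, the local factor at $p$ takes the closed form
\begin{equation*}
\mathcal{D}_p(s_1,\ldots,s_k) = (k-1)\int_0^1 (1-t)^{k-2} \prod_{j=1}^k (1-tp^{-s_j})^{-1}\, dt.
\end{equation*}
Expanding the integrand in powers of $p^{-s_j}$ and integrating term by term, one finds $\mathcal{D}_p = 1 + \tfrac{1}{k}\sum_j p^{-s_j} + O(p^{-2\sigma})$ with $\sigma := \min_j \Re(s_j)$. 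On the other hand, $\prod_{j=1}^k(1-p^{-s_j})^{1/k} = 1 - \tfrac{1}{k}\sum_j p^{-s_j} + O(p^{-2\sigma})$. The $O(p^{-\sigma})$ terms cancel exactly after multiplying, so $G_p := \mathcal{D}_p \prod_j (1-p^{-s_j})^{1/k} = 1 + O(p^{-2\sigma})$; hence $G := \prod_p G_p$ converges absolutely and is uniformly bounded on any region $\{\Re(s_j) > 1/2 + \delta\}^k$, in particular on $\prod_j R_j$ since each $R_j \subseteq \{\Re(s_j) > 3/4\}$.

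Both sides of the factorization $\mathcal{D} = G \cdot \prod_j \zeta(s_j)^{1/k}$ are analytic on $\Omega$ and agree there by Lemma \ref{lem:euler}, so the identity theorem propagates the equality to $\prod_j R_j$ and yields the stated continuation and the bound \eqref{eq:middlebound}. The main obstacle is the bookkeeping that produces the cancellation of the $O(p^{-\sigma})$ terms: this cancellation is precisely what forces $1/k$ as the correct exponent on each $\zeta(s_j)$ factor, and everything else—Taylor expansion, absolute convergence, and invocation of the identity theorem—is routine once the beta-integral representation of the inverse binomial coefficient is in hand.
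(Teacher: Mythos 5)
Your proof is correct and follows essentially the same strategy as the paper: pull out $\prod_{j=1}^k\zeta(s_j)^{1/k}$ and show that the residual Euler product $G$ has local factors $1+O(p^{-2\sigma})$, hence converges absolutely and stays bounded for $\Re(s_j)>3/4$. The only difference is a matter of technique in computing the local factor's expansion: the paper expands each $(1-p^{-s_j})^{1/k}$ via the binomial series and appeals to Taylor's theorem directly, whereas your beta-integral identity $\binom{v_1+\cdots+v_k+k-1}{k-1}^{-1}=(k-1)\int_0^1(1-t)^{k-2}t^{v_1+\cdots+v_k}\,dt$ gives a clean closed form $\mathcal{D}_p=(k-1)\int_0^1(1-t)^{k-2}\prod_j(1-tp^{-s_j})^{-1}\,dt$ from which the coefficient $\tfrac{1}{k}$ on $\sum_j p^{-s_j}$ and the resulting cancellation are transparent — a pleasant shortcut, though not a structurally different argument.
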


\begin{proof}
Let $(s_1,\dots,s_k)\in \mathbb{C}^k$ with $\sigma_j:=\Re(s_j)>1$ for $j=1,\dots,k.$ Then by Lemma \ref{lem:euler} we have the Euler product expression
\begin{align*}
&\zeta(s_1)^{-\frac{1}{k}}\cdots\zeta(s_k)^{-\frac{1}{k}}\mathcal{D}(s_1,\dots,s_k)\\
=&\prod_p\left(\prod_{j=1}^{k}\left(1-\frac{1}{p^{s_j}}\right)^{\frac{1}{k}}\right)\sum_{v_1=0}^{\infty}\cdots\sum_{v_k=0}^{\infty}{v_1+\cdots+v_k+k-1 \choose {k-1}}^{-1}
p^{-(v_1s_1+\cdots+v_ks_k)},
\end{align*}
where the $k$-th root is understood as its principal branch.\\
For $j=1,\ldots,k,$ expanding the $k$-th root as
\begin{align*}
\sum_{r=0}^{\infty}(-1)^{r}{\frac{1}{k} \choose r}p^{-rs_j},
\end{align*}
we find that the factors of the Euler product are 
$1+O\left(\sum_{i=1}^{k}\sum_{j=1}^{k} p^{-(\sigma_i+\sigma_j)} \right) $ by Taylor's theorem. Therefore, the function
\begin{align}\label{analytic}
\zeta(s_1)^{-\frac{1}{k}}\cdots\zeta(s_k)^{-\frac{1}{k}}\mathcal{D}(s_1,\dots,s_k)
\end{align}
can be continued analytically to the domain where
$\Re(s_j) > \frac{3}{4}$ for $j=1,\dots,k,$ in which it is
uniformly bounded.

On the other hand, for $(s_1,\dots,s_k) \in \prod_{j=1}^k R_j,$ we can express $\mathcal{D}(s_1,\dots,s_k)$ as
\begin{align*}
\zeta(s_1)^{\frac{1}{k}}\cdots\zeta(s_k)^{\frac{1}{k}}\left(\zeta(s_1)^{-\frac{1}{k}}\cdots\zeta(s_k)^{-\frac{1}{k}}\mathcal{D}(s_1,\dots,s_k)\right),
\end{align*}
and so the lemma follows.
\end{proof}

\begin{lemma}\label{power} 
In the open hypercube 
\begin{align*}
Q:=\left\{(s_1,\ldots,s_k) \in \mathbb{C}^k \, : \,  1<\Re(s_j)<7/4,\, |\Im(s_j)|<3/4 \text{ {\normalfont for} $j=1,\dots,k$}\right\},
\end{align*}
we have the estimate
\begin{align*}
\frac{\partial^{2k}}{\partial s_1^2\cdots\partial s_k^2}\mathcal{D}(s_1,\dots,s_k)
=\left(1+\frac{1}{k}\right)^k\frac{1}{k^k}&(s_1-1)^{-\frac{1}{k}-2}\cdots(s_k-1)^{-\frac{1}{k}-2}\\
\cdot &(1+O(|s_1-1|+\cdots+|s_k-1|)).
\end{align*}
\end{lemma}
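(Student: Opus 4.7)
The plan is to isolate the polar behavior of $\mathcal{D}$ at each $s_j=1$ by writing
\[
\mathcal{D}(s_1,\ldots,s_k) = \prod_{j=1}^k (s_j-1)^{-1/k}\cdot \Phi(s_1,\ldots,s_k)
\]
with $\Phi$ analytic near $(1,\ldots,1)$ and $\Phi(1,\ldots,1)=1$, and then to read off the top-order derivative by the product rule. For the factorization, Lemma \ref{lem:growth} shows that $H(s_1,\ldots,s_k):=\prod_{j=1}^k\zeta(s_j)^{-1/k}\,\mathcal{D}(s_1,\ldots,s_k)$ extends holomorphically and is bounded on $\{\Re(s_j)>3/4\}^k$. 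Since $(s-1)\zeta(s)$ is entire with value $1$ at $s=1$, the principal $k$-th root $\varphi(s):=[(s-1)\zeta(s)]^{1/k}$ is analytic in a neighborhood of $s=1$ with $\varphi(1)=1$, and $\Phi:=H\cdot\prod_{j=1}^k\varphi(s_j)$ supplies the required analytic factor.

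To identify $\Phi(1,\ldots,1)$ I would restrict to the diagonal. Setting $s_1=\cdots=s_k=s$ in the definition of $\mathcal{D}$, and using that each integer $n\ge 1$ gives rise to exactly $\tau_k(n)$ ordered $k$-factorizations, each weighted by $\tau_k(n)^{-1}$, one obtains $\mathcal{D}(s,\ldots,s)=\sum_n n^{-s}=\zeta(s)$. Comparing with the factorization above yields $\Phi(s,\ldots,s)=(s-1)\zeta(s)$, which tends to $1$ as $s\to 1$, so $\Phi(1,\ldots,1)=1$ by continuity. This pins down the leading constant.

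The conclusion is then a routine Leibniz computation. For any smooth function $F$, the single-variable identity
\[
\frac{\partial^2}{\partial s_j^2}\bigl[(s_j-1)^{-1/k}F\bigr]=(s_j-1)^{-1/k-2}\Bigl[\tfrac{1}{k}\bigl(1+\tfrac{1}{k}\bigr)F-\tfrac{2}{k}(s_j-1)\partial_{s_j}F+(s_j-1)^2\partial_{s_j}^2 F\Bigr]
\]
holds, with leading coefficient $\tfrac{1}{k}(1+\tfrac{1}{k})=(-\tfrac{1}{k})(-\tfrac{1}{k}-1)$. Applying this identity successively in $s_1,\ldots,s_k$ to $\mathcal{D}=\prod_j(s_j-1)^{-1/k}\cdot\Phi$ produces
\[
\frac{\partial^{2k}\mathcal{D}}{\partial s_1^2\cdots\partial s_k^2}=\prod_{j=1}^k(s_j-1)^{-1/k-2}\cdot\Bigl[\frac{(1+\tfrac{1}{k})^k}{k^k}\,\Phi(s_1,\ldots,s_k)+E(s_1,\ldots,s_k)\Bigr],
\]
where $E$ is a finite sum of terms each containing at least one factor $(s_j-1)$ multiplied by a bounded mixed partial of $\Phi$. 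Hence $E=O(\sum_j|s_j-1|)$ locally, and combined with the Taylor expansion $\Phi=1+O(\sum_j|s_j-1|)$ this gives the stated asymptotic (with the full claim on $Q$ following by compactness away from $(1,\ldots,1)$, where all factors are bounded). The only real obstacle is bookkeeping: the iterated Leibniz rule produces $3^k$ terms indexed by $(\epsilon_1,\ldots,\epsilon_k)\in\{0,1,2\}^k$, and one must check that the unique top-order contribution comes from the all-zero multi-index, all other terms carrying at least one explicit $(s_j-1)$.
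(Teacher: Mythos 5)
Your proof is correct and follows the same structural route as the paper's: factor out $\prod_j(s_j-1)^{-1/k}$, invoke the analytic continuation from Lemma~\ref{lem:growth} to produce a holomorphic remainder $\Phi$ near $(1,\dots,1)$, differentiate, and Taylor-expand. The genuine difference is in how the leading constant $\Phi(1,\dots,1)=1$ is identified. The paper computes the Euler factor of $\prod_j\zeta(s_j)^{-1/k}\mathcal{D}$ at $s_1=\cdots=s_k=1$ directly, collapsing the $k$-fold sum over $(v_1,\dots,v_k)$ via $\sum_{v_1+\cdots+v_k=v}1=\binom{v+k-1}{k-1}$ so that each prime factor telescopes to $1$. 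You instead observe the global identity $\mathcal{D}(s,\dots,s)=\sum_n\tau_k(n)\cdot\tau_k(n)^{-1}n^{-s}=\zeta(s)$ on the diagonal, so that $\Phi(s,\dots,s)=(s-1)\zeta(s)\to 1$. This is the same counting of tuples with fixed product, performed globally over $n$ rather than locally over prime powers; it is slightly cleaner since it avoids touching the Euler product at all and makes $\Phi(1,\dots,1)=1$ manifest. Your Leibniz bookkeeping (with the $3^k$ multi-indices in $\{0,1,2\}^k$ and the observation that every non-zero index carries a factor $(s_j-1)$) matches the paper's term-by-term differentiation of the power series, and the passage to uniformity over all of $Q$ by compactness of $\overline{Q}$ inside the domain of analyticity is a correct, if terse, remark.
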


\begin{proof}
By (\ref{analytic}) and the fact that
\begin{align} \label{zeta at s=1}
\zeta(s)=\frac{1}{s-1}+O(1),
\end{align}
we have the power series representation
\begin{align*}
(s_1-1)^{\frac{1}{k}}\cdots(s_k-1)^{\frac{1}{k}}
\mathcal{D}(s_1,\dots,s_k)=
\sum_{(i_1,\dots,i_k)\in \mathbb{Z}_{\geq 0}^k}
a_{i_1,\dots,i_k}(s_1-1)^{i_1}\cdots(s_k-1)^{i_k}
\end{align*}
in $Q$ for some constants $a_{i_1,\dots,i_k} \in \mathbb{C}.$
It follows that
\begin{align*}
\frac{\partial^{2k}}{\partial s_1^2\cdots\partial s_k^2}\mathcal{D}(s_1,\dots,s_k)
=\left(1+\frac{1}{k}\right)^k\frac{1}{k^k}&(s_1-1)^{-\frac{1}{k}-2}\cdots(s_k-1)^{-\frac{1}{k}-2}\nonumber\\
\cdot & (a_{\boldsymbol{0}}+O(|s_1-1|+\cdots+|s_k-1|)),
\end{align*}
where by (\ref{zeta at s=1}) and Lemma \ref{lem:euler},
the leading coefficient
\begin{align*}
a_{\boldsymbol{0}}&=\prod_p\left(1-\frac{1}{p}\right)\sum_{v_1=0}^{\infty}\cdots\sum_{v_k=0}^{\infty}{v_1+\cdots+v_k+k-1 \choose {k-1}}^{-1}
p^{-(v_1+\cdots+v_k)}\\
&=\prod_p \left(1-\frac{1}{p} \right)\sum_{v=0}^{\infty}{v+k-1 \choose k-1}
{v+k-1 \choose k-1}^{-1}p^{-v}=1.
\end{align*}
\end{proof}

\section{Proof of Theorem \ref{thm:dirichlet}}

We begin with writing
\begin{align*}
\sum_{n \leq x}\frac{1}{\tau_k(n)}\underset{d_1\cdots d_{k-1} \mid n}{\sum_{d_1 \leq n^{u_1}}\cdots
\sum_{d_{k-1} \leq n^{u_{k-1}}}}1 
=\sum_{n \leq x}\frac{1}{\tau_k(n)}
\left(\underset{d_1\cdots d_{k-1} \mid n}{\sum_{d_1 \leq x^{u_1}}\cdots
\sum_{d_{k-1} \leq x^{u_{k-1}}}}1
-\underset{\substack{d_1\cdots d_{k-1} \mid n
\\ n^{u_j}<d_j\leq x^{u_j} \text{ for some $j$}}}{\sum_{d_1 \leq x^{u_1}}\cdots
\sum_{d_{k-1} \leq x^{u_{k-1}}}}1
\right),
\end{align*}
where the main term is
\begin{align}\label{main}
\sum_{n \leq x}\frac{1}{\tau_k(n)}\underset{d_1\cdots d_{k-1} \mid n}{\sum_{d_1 \leq x^{u_1}}\cdots
\sum_{d_{k-1} \leq x^{u_{k-1}}}}1
=\sum_{d_1 \leq x^{u_1}}\cdots
\sum_{d_{k-1} \leq x^{u_{k-1}}}\sum_{d_k \leq x/(d_1\cdots d_{k-1})}\frac{1}{\tau_k(d_1\cdots d_{k})},
\end{align}
and the error term is
\begin{align}\label{error}
\sum_{n \leq x}\frac{1}{\tau_k(n)}\underset{\substack{d_1\cdots d_{k-1} \mid n
\\ n^{u_j}<d_j\leq x^{u_j}\text{ for some $j$}}}{\sum_{d_1 \leq x^{u_1}}\cdots
\sum_{d_{k-1} \leq x^{u_{k-1}}}}1\leq \sum_{\substack{j=1\\u_j \neq 0}}^{k-1}\sum_{n \leq x}\frac{1}{\tau_k(n)}
\sum_{n^{u_j}<d_j\leq x^{u_j}}\underset{\substack{d_i \leq x^{u_i}, i \neq j\\d_1\cdots d_{k-1} \mid n}}{\sum\cdots\sum}\quad
1.
\end{align}

Let us first bound the error term (\ref{error}).
For $j=1,\dots,k-1$ with $u_j \neq 0,$ 
we write $n=d_jm$ for some integer $m.$ Then $d_j>n^{u_j}$ implies $m<d_j^{(1-u_j)/u_j},$ and the number of ways of obtaining $m$ as a product $d_1\cdots d_{j-1}d_{j+1}\cdots d_k$ is bounded by $\tau_{k-1}(m).$ It follows that
\begin{align} \label{finaleerror}
\sum_{n \leq x}\frac{1}{\tau_k(n)}
\sum_{n^{u_j}<d_j\leq x^{u_j}}\underset{\substack{d_i \leq x^{u_i}
\text{ for }i \neq j\\d_1\cdots d_{k-1} \mid n}}{\sum\cdots\sum}\quad
1 \leq
\sum_{d_j\leq x^{u_j}}\sum_{m<d_j^{(1-u_j)/u_j}}\frac{\tau_{k-1}(m)}{\tau_k(d_jm)}.
\end{align}
If $u_{j}\leq \frac{1}{2},$ then using \cite[Theorem~14.2]{koukoulopoulos2020distribution}, this is bounded by
\begin{align}
\sum_{d_{j}\leq x^{u_{j}}} \sum_{m < d_{j}^{(1-u_{j})/u_{j}}}
\frac{\tau_{k-1}(m)}{\tau_k(m)} 
&\ll \sum_{2\leq d_{j}\leq x^{u_{j}}} \frac{d_{j}^{(1-u_{j})/u_{j}}}{\left(\log d_{j}^{(1-u_{j})/u_{j}}\right)^{\frac{1}{k}}} \nonumber \\
&\ll \frac{x}{(\log x)^{\frac{1}{k}}}. \nonumber
\end{align}
Otherwise, it follows from the simple observation 
\begin{align}\label{observation}
\frac{\tau_{k-1}(m)}{\tau_k(d_jm)} &\leq
\frac{\tau_{k-1}(d_{j}m)}{\tau_k(d_{j}m)} 
\leq \frac{\tau_{k-1}(d_{j})}{\tau_k(d_{j})} 
\end{align}
that the expression (\ref{finaleerror}) is bounded by
\begin{align}
\sum_{d_{j}\leq x^{u_{j}}}
d_{j}^{(1-u_{j})/u_{j}}
\frac{\tau_{k-1}(d_{j})}{\tau_k(d_{j})}
&\ll x^{1-u_{j}} \sum_{d_{j}\leq x^{u_{j}}} \frac{\tau_{k-1}(d_{j})}{\tau_k(d_{j})} \nonumber\\
&\ll \frac{x}{(\log x)^{\frac{1}{k}}} \nonumber
\end{align}
again using \cite[Theorem~14.2]{koukoulopoulos2020distribution}.

Now we are left with the main term (\ref{main}). In order to apply Mellin's inversion formula, we follow the treatment in \cite{granville2019beyond} and \cite[Chapter~13]{koukoulopoulos2020distribution}.

\begin{lemma} \label{mellin}
Let $T \geq 1.$ Let $\phi, \psi \colon [0, \infty) \to \mathbb{R}$ be smooth functions supported on $[0, 1]$ and $\left[0, 1+\frac{1}{T}\right]$ respectively with
\begin{align*} 
\begin{cases} 
\phi(y)=1 & \mbox{if } y \leq 1-\frac{1}{T},\\
\phi(y) \in [0,1] & \mbox{if } 1-\frac{1}{T}<y \leq 1,\\
\phi(y) = 0 & \mbox{if } y > 1,
\end{cases}
\end{align*}
and
\begin{align*}
\begin{cases} 
\psi(y)=1 & \mbox{if } y \leq 1,\\
\psi(y) \in [0,1] & \mbox{if } 1<y \leq 1+\frac{1}{T},\\
\psi(y) = 0 & \mbox{if } y > 1+\frac{1}{T}.
\end{cases}
\end{align*}
Moreover, for each integer $j\geq 0$, their derivatives  satisfy  the growth condition $\phi^{(j)}(y), \psi^{(j)}(y) \ll_j T^j$ uniformly for $y\geq 0.$ 
Let $\Phi(s), \Psi(s)$ be the Mellin transform of $\phi(y), \psi(y)$ respectively for $1 \leq \Re(s) \leq 2$, i.e.
\begin{align*}
\Phi(s)=\int_{0}^{\infty}\phi(y)y^s\frac{dy}{y},
\end{align*}
and
\begin{align*}
\Psi(s)=\int_{0}^{\infty}\psi(y)y^s\frac{dy}{y}.
\end{align*}
Then we have the estimates
\begin{align} \label{smallt}
\Phi(s), \Psi(s)=\frac{1}{s}+O\left(\frac{1}{T}\right),
\end{align}
and
\begin{align} \label{larget}
\Phi(s), \Psi(s) \ll_j \frac{T^{j-1}}{|s|^j}
\end{align}
for $j\geq 1.$
\end{lemma}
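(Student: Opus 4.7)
The strategy is to compare $\Phi$ and $\Psi$ directly with the Mellin transform $\int_0^1 y^{s-1}\,dy = 1/s$ of the characteristic function of $[0,1]$, exploiting that $\phi$ and $\psi$ differ from this characteristic function only on an interval of length $1/T$.

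For estimate (\ref{smallt}), I would use that $\phi$ equals $1$ on $[0,1-1/T]$ and vanishes outside $[0,1]$ to rewrite
\[
\Phi(s) - \frac{1}{s} = \int_{1-1/T}^{1}\bigl(\phi(y)-1\bigr) y^{s-1}\,dy.
\]
Since $|\phi(y)-1|\leq 1$ and $y^{\Re(s)-1}$ is uniformly bounded for $y \in [1/2,1]$ and $1 \leq \Re(s) \leq 2$, the right-hand side is $O(1/T)$. For $\Psi$ the corresponding difference reduces to $\int_{1}^{1+1/T}\psi(y) y^{s-1}\,dy$, which is bounded by the same argument.

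For estimate (\ref{larget}), I would repeatedly integrate by parts. Since $\phi$ has compact support in $[0,1]$ the boundary terms vanish, and iterating $j$ times yields
\[
\Phi(s) = \frac{(-1)^j}{s(s+1)\cdots(s+j-1)}\int_0^\infty \phi^{(j)}(y) y^{s+j-1}\,dy.
\]
The growth hypothesis gives $|\phi^{(j)}(y)|\ll T^j$, and because $\phi$ is constant on both $[0,1-1/T]$ and $[1,\infty)$, its $j$-th derivative (for $j\geq 1$) is supported on $[1-1/T,1]$. Therefore the integral is bounded by $T^j \cdot T^{-1} = T^{j-1}$, and since $|s(s+1)\cdots(s+j-1)|\gg |s|^j$ the claimed bound follows. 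The argument for $\Psi$ is entirely parallel, with $\psi^{(j)}$ supported on $[1,1+1/T]$.

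There is no serious obstacle here; the only minor bookkeeping point is reconciling the two bounds in the transitional range $|s| \asymp 1$, where one may either enlarge the implied constant or simply apply whichever of (\ref{smallt}) and (\ref{larget}) gives the smaller value.
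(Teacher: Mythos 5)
Your proof is correct and essentially self-contained, whereas the paper simply cites this lemma as Theorem~4 of Granville--Koukoulopoulos. The argument you give is the standard one: subtracting $1/s = \int_0^1 y^{s-1}\,dy$ and using that $\phi-1$ (resp.\ $\psi-\mathbf{1}_{[0,1]}$) is supported on an interval of length $1/T$ gives (\ref{smallt}); integrating by parts $j$ times, using that $\phi^{(j)}$ and $\psi^{(j)}$ are supported on intervals of length $1/T$ where they are $\ll_j T^j$, and the inequality $|s+i|\geq|s|$ valid for $\Re(s)\geq 1$ and $i\geq 0$, gives (\ref{larget}).

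Two small points of bookkeeping, neither of which affects the substance. First, your justification that $y^{\Re(s)-1}$ is bounded on $[1/2,1]$ is unnecessary and technically a restriction that could fail when $T<2$ (then $1-1/T<1/2$); the simpler observation that $y^{\Re(s)-1}\leq 1$ for $y\in[0,1]$ and $\Re(s)\geq 1$ handles all $T\geq 1$, and similarly $y^{\Re(s)+j-1}\leq (1+1/T)^{j+1}\ll_j 1$ for the $\Psi$ case. Second, your closing remark about "reconciling the two bounds in the transitional range $|s|\asymp 1$" is not needed: (\ref{smallt}) and (\ref{larget}) are separate estimates, both valid throughout the strip $1\leq\Re(s)\leq 2$, and the lemma does not claim a unified bound. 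One also wants to observe that in the integration by parts the boundary term at $y=0$ vanishes because $\Re(s)>0$, not only by compact support, but this is immediate.
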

\begin{proof}
See \cite[Theorem~4]{granville2019beyond}.
\end{proof}

We need the following version of Hankel's lemma to extract the main contribution from the multidimensional contour integral in the proof of Lemma \ref{mainlemma}.

\begin{lemma} \label{Hankel}
Let $x>1, \sigma>1$ and $\Re(\alpha)>1.$ Then we have
\begin{align*}
\frac{1}{2\pi i}\int_{\Re(s)=\sigma}\frac{x^s}{s(s-1)^{\alpha}}ds=\frac{1}{\Gamma(\alpha)}\int_{1}^x(\log y)^{\alpha-1}dy.
\end{align*}
\end{lemma}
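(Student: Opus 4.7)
The plan is to recognize $x^s/s$ as $\int_0^x y^{s-1}\,dy$, swap integrals by Fubini, and reduce to the classical Hankel identity
\[
\frac{1}{2\pi i}\int_{\Re(w)=c} e^w w^{-\alpha}\,dw = \frac{1}{\Gamma(\alpha)} \qquad (c>0,\ \Re(\alpha)>1),
\]
which is obtained by deforming the vertical line to the Hankel contour around the branch cut $(-\infty,0]$.

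Concretely, for $\Re(s) = \sigma > 0$ one has $x^s/s = \int_0^x y^{s-1}\,dy$, so
\[
\frac{1}{2\pi i}\int_{\Re(s)=\sigma}\frac{x^s}{s(s-1)^{\alpha}}\,ds = \frac{1}{2\pi i}\int_{\Re(s)=\sigma}\frac{1}{(s-1)^{\alpha}}\int_0^x y^{s-1}\,dy\,ds.
\]
The hypothesis $\Re(\alpha) > 1$ makes the double integral absolutely convergent, since $\int_0^x y^{\sigma-1}\,dy < \infty$ and $\int_{-\infty}^{\infty}((\sigma-1)^2 + t^2)^{-\Re(\alpha)/2}\,dt < \infty$, so Fubini lets me swap the order. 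It then suffices to evaluate the inner integral $I(y) := \frac{1}{2\pi i}\int_{\Re(s)=\sigma}\frac{y^{s-1}}{(s-1)^{\alpha}}\,ds$ for each $y \in (0,x]$.

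For $y > 1$, the substitution $s-1 = w/\log y$ (with $ds = dw/\log y$, $y^{s-1} = e^w$) transforms $I(y)$ into $(\log y)^{\alpha-1}$ times the Hankel integral above with $c = (\sigma-1)\log y > 0$, yielding $I(y) = (\log y)^{\alpha-1}/\Gamma(\alpha)$. For $0 < y < 1$, the integrand is analytic in $\Re(s) > 1$; shifting the contour to $\Re(s) = R$ gives $|I(y)| \ll y^{R-1}(R-1)^{1-\Re(\alpha)} \to 0$ as $R\to\infty$ (the horizontal connections at $|\Im(s)| = T$ vanish in the limit $T \to \infty$ thanks to $\Re(\alpha) > 1$), so $I(y) = 0$. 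Combining the two cases and integrating in $y$ yields $\int_0^x I(y)\,dy = \frac{1}{\Gamma(\alpha)}\int_1^x (\log y)^{\alpha-1}\,dy$, as required. The only delicate step is the deformation of the vertical line to the Hankel contour underlying the displayed identity at the start: $\Re(\alpha) > 1$ is exactly what is needed both for absolute convergence on the vertical line and for the vanishing of the connecting arcs, so this is really the pivotal hypothesis of the lemma.
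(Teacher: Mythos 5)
Your proof is correct, and it is the standard argument for this Hankel-type identity (the paper itself does not reprove it but simply cites Lemma 13.1 of Koukoulopoulos's book, which proceeds the same way: write $x^s/s=\int_0^x y^{s-1}\,dy$, apply Fubini, and evaluate the inner integral by rescaling to the Hankel formula for $1/\Gamma(\alpha)$). All the checkpoints are in order — the hypothesis $\Re(\alpha)>1$ does double duty, guaranteeing absolute convergence for Fubini and the decay needed to shift contours to the right when $0<y<1$ — so there is nothing to add.
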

\begin{proof}
See \cite[Lemma~13.1]{koukoulopoulos2020distribution}.
\end{proof}

We now prove the main lemma. 
\begin{lemma} \label{mainlemma}
Let $x_1,\dots,x_k \geq e$ and $S(x_1,\dots,x_k)$ denote the weighted sum
\begin{align*}
\sum_{d_1 \leq x_1}\cdots\sum_{d_k \leq x_k}\frac{(\log d_1)^2 \cdots (\log d_k)^2}{\tau_k(d_1\cdots d_k)}.
\end{align*}
Then we have
\begin{align*}
S(x_1,\dots,x_k)=\frac{1}{\Gamma\left(\frac{1}{k} \right)^k}\prod_{j=1}^{k}\int_1^{x_j}(\log y_j)^{\frac{1}{k}+1}dy_j+R(x_1,\dots,x_k)
\end{align*}
with
\begin{align*}
R(x_1,\dots,x_k) \ll x_1\cdots x_k\sum_{j=1}^k \left(\prod_{\substack{i=1\\i \neq j}}^k  (\log x_i)^{\frac{1}{k}+1}\right) (\log x_j)^{\frac{1}{k}}.
\end{align*}
\end{lemma}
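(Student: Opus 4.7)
The plan is to apply multidimensional Mellin inversion to $\frac{\partial^{2k}}{\partial s_1^2\cdots\partial s_k^2}\mathcal{D}(s_1,\ldots,s_k)$, whose Dirichlet coefficient on $n_1^{-s_1}\cdots n_k^{-s_k}$ is exactly $(\log n_1)^2\cdots(\log n_k)^2/\tau_k(n_1\cdots n_k)$, then to read off the leading behaviour near $(1,\ldots,1)$ from Lemma \ref{power}, and finally to collapse the resulting integral into a product of Hankel integrals via Lemma \ref{Hankel}.

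First I would bracket $S(x_1,\ldots,x_k)$ between two smoothed sums obtained by replacing each sharp cutoff $\mathbf{1}_{d_j\leq x_j}$ with $\phi(d_j/x_j)$ and $\psi(d_j/x_j)$ from Lemma \ref{mellin}, where $T$ is a parameter to be fixed (as a suitable power of $\log(x_1\cdots x_k)$) at the end. The boundary discrepancy is supported on $d_j\in((1-1/T)x_j,(1+1/T)x_j]$ for some $j$ and, using the trivial bound $(\log d_i)^2\leq(\log x_i)^2$ together with the one-dimensional count $\#\{d:\, x/T<d\leq x\}\ll x/T$, contributes at most $T^{-1}x_1\cdots x_k\prod_i(\log x_i)^2$, which is absorbed in the claimed remainder once $T$ is large enough.

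Second, Mellin inversion expresses each smoothed sum as
\begin{align*}
\frac{1}{(2\pi i)^k}\int_{(c_1)}\!\!\cdots\!\int_{(c_k)}\frac{\partial^{2k}\mathcal{D}}{\partial s_1^2\cdots\partial s_k^2}(s_1,\ldots,s_k)\prod_{j=1}^{k}\Phi(s_j)\,x_j^{s_j}\,ds_j,
\end{align*}
with each $c_j$ slightly greater than $1$. I then shift every contour leftwards onto a Hankel-type path $\mathcal{H}_j$ that wraps around $s_j=1$ inside $|\Im s_j|<1/4$ and reconnects to a vertical segment lying in a classical zero-free region for $\zeta(s_j)$. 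On those tails and connectors I combine the decay $\Phi(s_j)\ll T^{j-1}/|s_j|^j$ from (\ref{larget}) with the bound of Lemma \ref{lem:growth} for $\mathcal{D}$, transferred to the $2k$-th mixed partial via Cauchy's formula on a small polydisc; for the chosen $T$ all such contributions fall within the stated error.

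On the product Hankel contour I substitute the expansion of Lemma \ref{power}. The leading piece factors across coordinates, and each factor evaluates by Lemma \ref{Hankel} with $\alpha=\frac{1}{k}+2$ (after replacing $\Phi(s_j)$ by $1/s_j$ through (\ref{smallt}) at a cost absorbed in the smoothing error) to $\frac{1}{\Gamma(1/k+2)}\int_1^{x_j}(\log y_j)^{1/k+1}\,dy_j$; the product of constants telescopes to $\Gamma(1/k)^{-k}$ via $\Gamma(\tfrac{1}{k}+2)=(1+\tfrac{1}{k})\tfrac{1}{k}\,\Gamma(\tfrac{1}{k})$, giving the main term. The $O(\sum_j|s_j-1|)$ correction from Lemma \ref{power} lowers the order of the singularity at a single $s_j$ by one, so Lemma \ref{Hankel} with $\alpha=\frac{1}{k}+1$ produces $\int_1^{x_j}(\log y_j)^{1/k}dy_j\ll x_j(\log x_j)^{1/k}$ in place of $x_j(\log x_j)^{1/k+1}$; summing over $j$ yields exactly the remainder $R(x_1,\ldots,x_k)$ in the statement. \textbf{The main obstacle} will be the bookkeeping of the $k$-fold contour shift: Lemma \ref{lem:growth} bounds $\mathcal{D}$ rather than its $2k$-th partial, so the Cauchy transfer must be done carefully, and the Mellin decay of $\Phi$ must compensate for the simultaneous growth of $|\zeta(s_j)|^{1/k}$ in every coordinate, uniformly in the $x_j$.
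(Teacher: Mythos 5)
Your proposal captures the paper's overall strategy: bracket $S$ by smoothed sums built from $\phi,\psi$ of Lemma \ref{mellin}, express the smoothed sum via multidimensional Mellin inversion against $\frac{\partial^{2k}\mathcal{D}}{\partial s_1^2\cdots\partial s_k^2}$, transfer Lemma \ref{lem:growth} to the mixed partial with Cauchy's formula on a polydisc, extract the singularity at $(1,\ldots,1)$ by Lemma \ref{power}, and finish with Lemma \ref{Hankel}. The one tactical difference is that you shift each contour leftward onto a Hankel loop around $s_j=1$ joined to vertical tails in a zero-free region. The paper never leaves the line $\Re(s_j)=1+\frac{1}{2\log x_j}$: it splits each vertical contour by height into a near piece $I_j^{(1)}$, a middle piece $I_j^{(2)}$ and a far piece $I_j^{(3)}$, discards the last two as error, applies Lemma \ref{power} on the near product, and then re-completes the near piece back to the full vertical line at a cost one power of $\log$ smaller — at which point Lemma \ref{Hankel} applies verbatim. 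Staying at abscissa $>1$ is slicker: Lemma \ref{Hankel} is stated for a vertical line $\Re(s)=\sigma>1$, so on a Hankel loop you would owe an additional deformation (or a separate Hankel-loop $\Gamma$-formula that the paper does not supply), and the branch cut of $\zeta^{1/k}$ along $(-\infty,1]$ is only ever grazed by the small Cauchy polydisc, never by the main contour.

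There is one concrete gap. You fix a single smoothing parameter $T$ as a power of $\log(x_1\cdots x_k)$, but the paper uses $T_j=2(\log x_j)^2$, and this choice is not cosmetic. The stated remainder must hold uniformly in $x_1,\ldots,x_k$, and the lemma is applied downstream with some $x_j$ as small as $e$ while others are near $x$. On the middle range of the $j$-th contour the Cauchy transfer produces a factor $(\log T)^{\frac{1}{k}+3}$, and matching the $j$-th summand of $R$ requires $(\log T)^{\frac{1}{k}+3}\ll(\log x_j)^{\frac{1}{k}}$ uniformly. With a single $T$ governed by $\log(x_1\cdots x_k)$ and $x_j=e$ (so $\log x_j=1$), the left side tends to infinity as any other $x_i\to\infty$, so the bound fails; you should let $T=T_j$ depend on $j$ exactly as the paper does. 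A smaller point: the $O\bigl(\sum_j|s_j-1|\bigr)$ correction of Lemma \ref{power} is an upper bound, not an identity, so you cannot ``evaluate'' it by Lemma \ref{Hankel}; you should instead bound the corresponding absolute-value integrals, as the paper does in (\ref{errornew})–(\ref{error1}), which yields the same order $x_j(\log x_j)^{\frac{1}{k}}$.
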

As in \cite{granville2019beyond} and \cite[Chapter~13]{koukoulopoulos2020distribution}, we introduce powers of logarithms to ensure that the major contribution to the multiple Perron integral below comes from $s_1,\ldots,s_k \approx 1.$ Later on, they will be removed by partial summation.
\begin{proof}
The proof consists of four steps: Mellin inversion, localization, approximation and completion. For $j=1,\ldots,k$, let $T_j=2(\log x_j)^{2}$ and $\phi_j, \psi_j$ be any smooth functions coincide with $\phi, \psi$ respectively from Lemma \ref{mellin}.
Then the weighted sum $S(x_1,\ldots,x_k)$ is bounded between
\begin{align*} 
\sum_{d_1=1}^{\infty}\cdots\sum_{d_k =1}^{\infty}\frac{(\log d_1)^2 \cdots (\log d_k)^2}{\tau_k(d_1\cdots d_k)}\phi_1\left(\frac{d_1}{x_1} \right)\cdots\phi_k\left(\frac{d_k}{x_k}\right),
\end{align*}
and
\begin{align} \label{upperupper}
\sum_{d_1=1}^{\infty}\cdots\sum_{d_k =1}^{\infty}\frac{(\log d_1)^2 \cdots (\log d_k)^2}{\tau_k(d_1\cdots d_k)}\psi_1\left(\frac{d_1}{x_1} \right)\cdots\psi_k\left(\frac{d_k}{x_k}\right).
\end{align}
To avoid repetitions, we only establish the upper bound here.
Applying Mellin's inversion formula, the expression (\ref{upperupper}) becomes
\begin{align*} 
\sum_{d_1 =1}^{\infty}\cdots\sum_{d_k =1}^{\infty}\frac{(\log d_1)^2 \cdots (\log d_k)^2}{\tau_k(d_1\cdots d_k)}
\prod_{j=1}^{k}\left(\frac{1}{2\pi i}\int_{\Re(s_j)=1+\frac{1}{2\log x_j}}\Psi_j(s_j)\left( \frac{d_j}{x_j}\right)^{-s_j}ds_j \right).
\end{align*}
Then, by Lemma \ref{lem:interchange} and Lemma {\ref{mellin}}, it is valid to interchange the order of summation and integration, and so this becomes
\begin{align} \label{afterinterchange}
\frac{1}{(2\pi i)^k}\int_{\Re(s_1)=1+\frac{1}{2\log x_1}}\cdots\int_{\Re(s_k)=1+\frac{1}{2\log x_k}}
&\left(\frac{\partial^{2k}}{\partial s_1^2\cdots\partial s_k^2}\mathcal{D}(s_1,\dots,s_k)\right) \nonumber\\
\cdot&\Psi_1(s_1)\cdots\Psi_k(s_k)x_1^{s_1}\cdots x_k^{s_k}ds_1\cdots ds_k.
\end{align}

For each $j=1,\dots,k,$ we decompose the vertical contour $ I_j:=\left\{s_j \in \mathbb{C}\, : \,\Re(s_j)=1+\frac{1}{2\log x_j}\right\}$ as $I_{j}^{(1)} \cup I_{j}^{(2)} \cup I_{j}^{(3)}$ (traversed upwards),  where
\begin{align*}
I_{j}^{(1)}:=\left\{s_j \in I_j \, : \, |\Im(s_j)|\leq 1/2\right\},
\end{align*}
\begin{align*}
I_{j}^{(2)}:=\left\{s_j \in I_j \, : \, 1/2<|\Im(s_j)|\leq  T_j^2/2 \right\},
\end{align*}
and
\begin{align*}
I_{j}^{(3)}:=\left\{s_j \in I_j \, : \, |\Im(s_j)|>  T_j^2/2\right\}.
\end{align*}
To establish an upper bound on the second derivative of the multiple Dirichlet series, we shall apply Cauchy's integral formula for derivatives of $k$ variables. For this purpose, we invoke Lemma \ref{lem:growth} with the classical zero-free region 
\begin{align*}
R_j:=\left\{s_j \in \mathbb{ C} \,:\, \Re(s_j)> 
\begin{cases}
1-\frac{c}{\log T_j} & \mbox{{\normalfont if} } \frac{1}{4}<|\Im(s_j)|< {T_j}^{2},\\
1 & \mbox{{\normalfont otherwise }} 
\end{cases}
\right\}
\end{align*}
with $c=1/100$ say, for $j=1,\ldots,k.$ Moreover, we introduce the distinguished boundary
\begin{align*}
\Gamma_{s_1,\dots,s_k}:=\left\{(w_1,\dots,w_k) \in \mathbb{C}^k \, : \, 
|w_j-s_j|=
\begin{cases}
\frac{|s_j-1|}{2} & \mbox{if } s_j \in I_j^{(1)},\\
\frac{c}{4\log T_j} & \mbox{if } s_j \in I_j^{(2)},\\
\frac{1}{4\log x_j} & \mbox{if } s_j \in I_j^{(3)}
\end{cases}
\text{ for $j=1,\dots,k$}
\right\}
\end{align*}
as there will be different bounds on $\zeta(w_j)$ depending on the height.
Then, Cauchy's formula implies 
\begin{align*}
\frac{\partial^{2k}}{\partial s_1^2\cdots\partial s_k^2}\mathcal{D}(s_1,\dots,s_k) 
&\ll \frac{\sup_{(w_1,\dots,w_k) \in \Gamma_{s_1,\dots,s_k}}|\mathcal{D}(w_1,\dots,w_k)|}
{\left(\prod_{\substack{j:s_j \in I_{j}^{(1)}}}|s_j-1|^2\right)
\left(\prod_{j:s_j \in I_{j}^{(2)}}(\log T_j)^{-2}\right)\left(
\prod_{j:s_j \in I_{j}^{(3)}}
(\log x_j)^{-2}\right)} 
\end{align*}
with 
\begin{align*}
\mathcal{D}(w_1,\dots,w_k)\ll |\zeta(w_1)|^{\frac{1}{k}}\cdots|\zeta(w_k)|^{\frac{1}{k}}
\end{align*}
given by (\ref{eq:middlebound}) from Lemma \ref{lem:growth}. Using (\ref{zeta at s=1}), \cite[Theorem~3.5]{titchmarsh1986theory} that
\begin{align*}
\zeta(w_j) \ll \log T_j
\end{align*}
whenever $\frac{1}{4}\leq|\Im(w_j)| \leq T_j^2,$ and the simple upper bound
\begin{align*}
|\zeta(w_j)| \leq \sum_{n=1}^{\infty}\frac{1}{n^{1+\frac{1}{2\log x_j}}} \ll \log x_j,
\end{align*}
we arrive at the derivative bound
\begin{align} \label{derivativebound}
\frac{\partial^{2k}}{\partial s_1^2\cdots\partial s_k^2}\mathcal{D}(s_1,\dots,s_k)  &\ll
\prod_{j:s_j \in I_{j}^{(1)}}|s_j-1|^{-\frac{1}{k}-2}
\prod_{j:s_j \in I_{j}^{(2)}}(\log T_j)^{\frac{1}{k}+2}
\prod_{j:s_j \in I_{j}^{(3)}}
(\log x_j)^{\frac{1}{k}+2}.
\end{align}
 Applying (\ref{larget}) with $j=1,2$ from Lemma \ref{mellin}, for $j=1,\dots,k,$ we have the estimates 
\begin{align*} 
(\log x_j)^{\frac{1}{k}+2}\int_{I_{j}^{(3)}}|\Psi(s_j)x_j^{s_j}ds_j| &\ll x_j(\log x_j)^{\frac{1}{k}+2}T_j\int_{T_j^2/2}^{\infty}\frac{dt}{t^2}\\
&\ll \frac{x_j(\log x_j)^{\frac{1}{k}+2}}{T_j},
\end{align*}
\begin{align*} 
(\log T_j)^{\frac{1}{k}+2}\int_{I_{j}^{(2)}}|\Psi(s_j){x_j}^{s_j}ds_j| &\ll x_j(\log T_j)^{\frac{1}{k}+2}\int_{1/2}^{T_j^2/2}\frac{dt}{t}\\
&\ll x_j(\log T_j)^{\frac{1}{k}+3},
\end{align*}
and
\begin{align} \label{I1bound}
\int_{I_{j}^{(1)}}|(s_j-1)^{-\frac{1}{k}-2}\Psi(s_j){x_j}^{s_j}ds_j|
&\ll \int_{I_{j}^{(1)}}\left|(s_j-1)^{-\frac{1}{k}-2}{x_j}^{s_j}\frac{ds_j}{s_j}\right|
 \nonumber\\
&\ll x_j\int_{-1/2}^{1/2}\left|\frac{1}{2\log x_j}+it \right|^{-\frac{1}{k}-2}dt \nonumber\\
&\ll x_j(\log x_j)^{\frac{1}{k}+1}.
\end{align}
Therefore, combining with (\ref{smallt}) from
Lemma \ref{mellin} and 
(\ref{derivativebound}), the main contribution to (\ref{afterinterchange}) is 
\begin{align} \label{I1}
\frac{1}{(2\pi i)^k}\int_{I_{1}^{(1)}}\cdots\int_{I_{k}^{(1)}}
\left(\frac{\partial^{2k}}{\partial s_1^2\cdots\partial s_k^2}\mathcal{D}(s_1,\dots,s_k)\right)x_1^{s_1}\cdots x_k^{s_k}
\frac{ds_1}{s_1}\cdots\frac{ds_k}{s_k}
\end{align}
with an error term 
\begin{align} \label{strongerror}
\ll x_1\cdots x_k\sum_{j=1}^k \left(\prod_{\substack{i=1\\i \neq j}}^k  (\log x_i)^{\frac{1}{k}+1}\right) (\log x_j)^{\frac{1}{k}}
\end{align}
as $T_j= 2(\log x_j)^{2}$ for $j=1,\ldots,k.$

 Applying Lemma \ref{power}, the main contribution to (\ref{I1}) is
\begin{align} \label{mainnew}
\left(1+\frac{1}{k}\right)^k\frac{1}{k^k}\prod_{j=1}^k\left(\frac{1}{2\pi i}\int_{I_j^{(1)}}(s_j-1)^{-\frac{1}{k}-2}x_j^{s_j}\frac{ds_j}{s_j}\right)
\end{align}
with an error term 
\begin{align} \label{errornew}
\ll \sum_{j=1}^k \int_{I_{j}^{(1)}}\left|(s_j-1)^{-\frac{1}{k}-1}x_j^{s_j}\frac{ds_j}{s_j} \right| \prod_{\substack{i=1\\i \neq j}}^k 
\int_{I_{i}^{(1)}} \left|(s_i-1)^{-\frac{1}{k}-2}x_i^{s_i}\frac{ds_i}{s_i} \right|.
\end{align}
For $j=1,\dots,k,$ we have 
\begin{align*}
\int_{I_{j}^{(1)}} \left|(s_j-1)^{-\frac{1}{k}-1}x_j^{s_j}\frac{ds_j}{s_j} \right|
&\ll x_j\int_{-1/2}^{1/2} \left|\frac{1}{2\log x_j}+it \right|^{-\frac{1}{k}-1}dt\\
&\ll x_j(\log x_j)^{\frac{1}{k}}.
\end{align*}
Combining with (\ref{I1bound}), the expression (\ref{errornew}) is 
\begin{align} \label{error1}
\ll x_1\cdots x_k\sum_{j=1}^k  \left(\prod_{\substack{i=1\\i \neq j}}^k (\log x_i)^{\frac{1}{k}+1}\right)(\log x_j)^{\frac{1}{k}}.
\end{align}

Since for $j=1,\dots,k$ we have the bound
\begin{align*}
\int\limits_{\substack{\Re(s_j)=1+\frac{1}{2\log x_j}\\|\Im(s_j)|>\frac{1}{2}}} \left|(s_j-1)^{-\frac{1}{k}-2}x_j^{s_j}\frac{ds_j}{s_j} \right|
&\ll x_j\int_{1/2}^{\infty} t^{-\frac{1}{k}-3}dt\\
&\ll x_j,
\end{align*}
it follows from (\ref{I1bound}) that the main contribution to (\ref{mainnew}) is 
\begin{align} \label{mainnewnew}
\left(1+\frac{1}{k}\right)^k\frac{1}{k^k} \prod_{j=1}^k \left(\frac{1}{2\pi i} \int_{\Re(s_j)=1+\frac{1}{2\log x_j}}(s_j-1)^{-\frac{1}{k}-2}x_j^{s_j}\frac{ds_j}{s_j}\right)
\end{align}
with an error term 
\begin{align} \label{errornewnew}
\ll x_1\cdots x_k\sum_{j=1}^k \prod_{\substack{i=1\\i \neq j}}^k (\log x_i)^{\frac{1}{k}+1}.
\end{align}
 Applying Lemma \ref{Hankel}, for $j=1,\dots,k$ we have
\begin{align*} 
\frac{1}{2\pi i}\int_{\Re(s_j)=1+\frac{1}{2\log x_j}}(s_j-1)^{-\frac{1}{k}-2}x_j^{s_j}\frac{ds_j}{s_j}&=\frac{1}{\Gamma \left(\frac{1}{k}+2 \right)}\int_{1}^{x_j}(\log y_j)^{\frac{1}{k}+1}dy_j\nonumber\\
&=\left(1+\frac{1}{k}\right)^{-1} \frac{k}{\Gamma \left(\frac{1}{k} \right)}\int_{1}^{x_j}(\log y_j)^{\frac{1}{k}+1}dy_j.
\end{align*}
Finally, the lemma follows from collecting the main term (\ref{mainnewnew}) and the error terms (\ref{strongerror}), (\ref{error1}) and (\ref{errornewnew}). 

\end{proof}

To proceed to the computation of the main term (\ref{main}), we first show that it suffices to limit ourselves to the region where $u_1+\cdots+u_{k-1} \leq 1-\frac{1}{\log x}.$ 
Otherwise, if $u_1+\cdots+u_{k-1} > 1-\frac{1}{\log x},$ then we may assume without loss of generality that $u_{k-1}\geq \frac{1}{2k}.$ We will now show that when replacing $u_{k-1}$ by $u_{k-1}-\frac{1}{\log x},$ both the right-hand sides of (\ref{main}) and (\ref{fundamental}) are changed by a negligible amount.
Arguing similarly as before, we have
\begin{align} \label{replace}
\sum_{d_1 \leq x^{u_1}}\cdots\sum_{d_{k-2}\leq x^{u_{k-2}}}&
\sum_{x^{u_{k-1}-\frac{1}{\log x}}\leq d_{k-1} \leq x^{u_{k-1}}}\sum_{d_k\leq x/(d_1\cdots d_{k-1})}
\frac{1}{\tau_k(d_1\cdots d_{k})} \nonumber\\
\leq & \sum_{x^{u_{k-1}-\frac{1}{\log x}}\leq d_{k-1} \leq x^{u_{k-1}}}
\sum_{m\leq x/d_{k-1}}\frac{\tau_{k-1}(m)}{\tau_{k}(d_{k-1}m)}.
\end{align}
If $u_{k-1}\leq \frac{1}{2},$ then using \cite[Theorem~14.2]{koukoulopoulos2020distribution}, this is bounded by
\begin{align*}
\ll x^{u_{k-1}}\sum_{m\leq x^{1+\frac{1}{\log x}-u_{k-1}} }\frac{\tau_{k-1}(m)}{\tau_{k}(m)}
\ll \frac{x}{(\log x)^{\frac{1}{k}}}.
\end{align*}
Otherwise, again it follows from the observation (\ref{observation}) that (\ref{replace}) is
\begin{align*}
\ll x^{1-u_{k-1}}\sum_{d_{k-1}\leq x^{u_{k-1}}}\frac{\tau_{k-1}(d_{k-1})}{\tau_{k}(d_{k-1})}
\ll \frac{x}{(\log x)^{\frac{1}{k}}}.
\end{align*}

On the other hand, by making the change of variables $t_j=(1-t_{k-1})s_j$ for $j=1,\ldots, k-2,$ we have
\begin{align} \label{repetition}
x\int_{u_{k-1}-\frac{1}{\log x}}^{u_{k-1}}&t_{k-1}^{\frac{1}{k}-1}\left(\int_0^{u_1}\cdots\int_0^{u_{k-2}}
t_1^{\frac{1}{k}-1}\cdots t_{k-2}^{\frac{1}{k}-1}(1-t_1-\cdots-t_{k-1})^{\frac{1}{k}-1}
dt_1\cdots dt_{k-2}\right)dt_{k-1} \nonumber\\
\leq &  x  \int_{u_{k-1}-\frac{1}{\log x}}^{u_{k-1}}t_{k-1}^{\frac{1}{k}-1}(1-t_{k-1})^{-\frac{1}{k}}dt_{k-1} \nonumber\\
&\cdot\int_0^{\frac{u_1}{1-u_{k-1}}}\cdots\int_0^{\frac{u_{k-2}}{1-u_{k-1}}}
s_1^{\frac{1}{k}-1}\cdots s_{k-2}^{\frac{1}{k}-1}(1-s_1-\cdots-s_{k-2})^{\frac{1}{k}-1}
ds_1\cdots ds_{k-2} \nonumber\\
\ll &  x  \int_{0}^{\frac{1}{\log x}}t_{k-1}^{\frac{1}{k}-1}(1-t_{k-1})^{-\frac{1}{k}}dt_{k-1}
\ll \frac{x}{(\log x)^{\frac{1}{k}}}.
\end{align}
Therefore, 
we can always assume $u_1+\cdots+u_{k-1} \leq 1-\frac{1}{\log x}.$ 
Arguing similarly, we can further limit ourselves to the smaller region where $u_1,\dots,u_{k-1} \geq \frac{1}{\log x}$ as well.

In order to apply Lemma \ref{mainlemma}, we express the main term (\ref{main}) as 
\begin{align} \label{1to3}
\sum_{3 \leq d_1 \leq x^{u_1}}\cdots&\sum_{3 \leq d_{k-1}\leq x^{u_{k-1}}}\sum_{3 \leq d_k \leq x/(d_1\cdots d_{k-1})}\frac{1}{\tau_k(d_1\cdots d_{k})} \nonumber\\
&+O\left(\sum_{j=1}^{k-1}\sum_{d_j=1,2}
\underset{\substack{d_i \leq x^{u_i}\\i=1,\dots,k-1, i \neq j}}{\sum\cdots\sum}
\sum_{d_k \leq x/(d_1\cdots d_{k-1})} 
\frac{1}{\tau_k(d_1\cdots d_{k})}
\right) \nonumber \\
&+O\left(\sum_{d_k=1,2}\sum_{d_1 \leq x^{u_1}}\cdots\sum_{d_{k-1}\leq x^{u_{k-1}}}
\frac{1}{\tau_k(d_1\cdots d_{k})}
\right).
\end{align}
For $j=1,\dots,k-1,$ again it follows from \cite[Theorem~14.2]{koukoulopoulos2020distribution} that
\begin{align*}
\sum_{d_j=1,2} 
\underset{\substack{d_i \leq x^{u_i}\\i=1,\dots,k-1, i \neq j}}{\sum\cdots\sum}
\sum_{d_k \leq x/(d_1\cdots d_{k-1})} 
\frac{1}{\tau_k(d_1\cdots d_{k})}
&\leq  \sum_{m \leq x}\frac{\tau_{k-1}(m)}{\tau_k(m)} 
+\sum_{m \leq x/2} \frac{\tau_{k-1}(m)}{\tau_{k-1}(2m)}
\nonumber\\
&\ll \frac{x}{(\log x)^{\frac{1}{k}}},
\end{align*}
and similarly
\begin{align*} 
\sum_{d_{k}=1,2}\sum_{d_1 \leq x^{u_1}}\cdots\sum_{d_{k-1}\leq x^{u_{k-1}}}
\frac{1}{\tau_k(d_1\cdots d_{k})}
& \leq  \sum_{m \leq x}\frac{\tau_{k-1}(m)}{\tau_k(m)} 
+\sum_{m \leq x/2} \frac{\tau_{k-1}(m)}{\tau_{k-1}(2m)} \nonumber\\
&\ll \frac{x}{(\log x)^{\frac{1}{k}}}.
\end{align*}
By partial summation (or more precisely multiple Riemann-Stieltjes integration) and Lemma \ref{mainlemma}, the main term of (\ref{1to3}) is
\begin{align} 
&\int_{e}^{x^{u_1}}\cdots\int_{e}^{x^{u_{k-1}}}\int_{e}^{\frac{x}{x_1\cdots x_{k-1}}}\frac{1}{(\log x_1)^2}\cdots\frac{1}{(\log x_k)^2}d S(x_1,\dots,x_k) \nonumber\\
=&\frac{1}{\Gamma\left(\frac{1}{k} \right)^k}\int_{e}^{x^{u_1}}\cdots\int_{e}^{x^{u_{k-1}}}\int_{e}^{\frac{x}{x_1\cdots x_{k-1}}}(\log x_1)^{\frac{1}{k}-1}\cdots(\log x_k)^{\frac{1}{k}-1} dx_1\cdots dx_k \nonumber
\\&+\int_{e}^{x^{u_1}}\cdots\int_{e}^{x^{u_{k-1}}}\int_{e}^{\frac{x}{x_1\cdots x_{k-1}}}\frac{1}{(\log x_1)^2}\cdots\frac{1}{(\log x_k)^2}d R(x_1,\dots,x_k)\nonumber\\ 
=:& I_1 + I_2. \nonumber
\end{align}

Finally, it remains to compute the integrals $I_1$ and $I_2.$

\begin{lemma} \label{lemmaI_1}
The first integral $I_1$ equals
\begin{align*}
F(u_1,\ldots,u_{k-1})x+O\left(\frac{x}{(\log x)^{\frac{1}{k}}} \right).
\end{align*}
\end{lemma}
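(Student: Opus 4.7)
My plan is to evaluate the innermost integral by integration by parts, then to change to logarithmic coordinates $t_j = \log x_j/\log x$ and recognize the resulting expression as the Dirichlet distribution function.

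First, fix $x_1,\ldots,x_{k-1}$ and set $y = x/(x_1\cdots x_{k-1})$. Integrating by parts (after the substitution $w = \log z$ and iterating) yields
\begin{align*}
\int_e^y (\log z)^{1/k-1}\,dz = y(\log y)^{1/k-1} + O\bigl(y(\log y)^{1/k-2}\bigr),
\end{align*}
uniformly for $y\geq e$. I would substitute this into $I_1$, then change variables via $t_j = \log x_j/\log x$. Under this substitution, all powers of $\log x$---coming from the Jacobians $dx_j/x_j = \log x\,dt_j$, the factors $(\log x_j)^{1/k-1} = (t_j\log x)^{1/k-1}$, and $(\log y)^{1/k-1} = (\log x)^{1/k-1}(1-t_1-\cdots-t_{k-1})^{1/k-1}$---cancel exactly (the exponent sums to $0$), leaving the main contribution
\begin{align*}
\frac{x}{\Gamma(1/k)^k}\int_{1/\log x}^{u_1}\cdots\int_{1/\log x}^{u_{k-1}}(1-t_1-\cdots-t_{k-1})^{1/k-1}\prod_{j=1}^{k-1}t_j^{1/k-1}\,dt_1\cdots dt_{k-1}.
\end{align*}
Extending the lower limits from $1/\log x$ to $0$ changes this by $\ll x(\log x)^{-1/k}$, since $\int_0^{1/\log x}t_j^{1/k-1}\,dt_j \asymp (\log x)^{-1/k}$ for each $j$ while the remaining Dirichlet-type integral is bounded. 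This produces $F(u_1,\ldots,u_{k-1})\,x + O(x(\log x)^{-1/k})$.

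The principal difficulty lies in controlling the error from the $O(y(\log y)^{1/k-2})$ term in the asymptotic expansion. After the change of variables, this contribution becomes
\begin{align*}
\frac{x}{\log x}\int_{1/\log x}^{u_1}\cdots\int_{1/\log x}^{u_{k-1}}(1-t_1-\cdots-t_{k-1})^{1/k-2}\prod_{j=1}^{k-1}t_j^{1/k-1}\,dt_1\cdots dt_{k-1}.
\end{align*}
Although $(1-\sum t_j)^{1/k-2}$ is non-integrable at $\sum t_j = 1$, the restriction $u_1+\cdots+u_{k-1}\leq 1-1/\log x$---which may be imposed without loss of generality at the outset of the proof of Theorem~\ref{thm:dirichlet} via \eqref{replace}--\eqref{repetition}---forces $U := 1-t_1-\cdots-t_{k-1} \geq 1/\log x$ throughout. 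Changing variables so that $U$ plays the role of a coordinate, the one-dimensional bound $\int_{1/\log x}^{O(1)}U^{1/k-2}\,dU \ll (\log x)^{1-1/k}$ together with the boundedness of the remaining $(k-2)$-dimensional Dirichlet-type integral shows that this term is $\ll x(\log x)^{-1/k}$, completing the proof.
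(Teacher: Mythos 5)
Your proof follows essentially the same route as the paper: integrate the innermost variable by parts, change to logarithmic coordinates, identify the Dirichlet integral as the main term, extend the lower limits to $0$ at acceptable cost, and control the leftover error through the integral $\frac{x}{\log x}\int(1-\sum t_j)^{1/k-2}\prod t_j^{1/k-1}\,dt$. One pleasant streamlining you make is absorbing the boundary constant from the lower endpoint $z=e$ directly into the $O\bigl(y(\log y)^{1/k-2}\bigr)$ error --- valid because $y(\log y)^{1/k-2}$ is bounded below on $[e,\infty)$ --- whereas the paper carries this term separately (it appears as $-e/(\log x)^{1/k}$ in \eqref{magic}) and disposes of it through a case split on whether some $u_j>\frac{1}{2k}$; your absorption makes that case analysis unnecessary. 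One small inaccuracy in your final bound: after isolating $U=1-\sum t_j$ as a coordinate, the remaining $(k-2)$-dimensional Dirichlet-type integral is \emph{not} uniformly bounded in $U$; as in \eqref{repetition} it is $\ll (1-U)^{-1/k}$, which blows up as $U\to 1$. The conclusion still holds since
\begin{align*}
\int_{1/\log x}^{1}U^{1/k-2}(1-U)^{-1/k}\,dU \ll \int_{1/\log x}^{1/2}U^{1/k-2}\,dU + \int_{1/2}^{1}(1-U)^{-1/k}\,dU \ll (\log x)^{1-1/k},
\end{align*}
but the justification should appeal to the integrability of $(1-U)^{-1/k}$ near $U=1$, not to boundedness of the inner integral.
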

\begin{proof}
Making the change of variables $x_j=x^{t_j}$ for $j=1,\ldots,k-1$, the integral $I_1$ becomes
\begin{align*}
\frac{\log x}{\Gamma\left(\frac{1}{k} \right)^k}
\int_{{\frac{1}{\log x}}}^{u_1}\cdots\int_{{\frac{1}{\log x}}}^{u_{k-1}}\int_{{\frac{1}{\log x}}}^{1-t_1-\cdots -t_{k-1}}
t_1^{\frac{1}{k}-1}\cdots t_k^{\frac{1}{k}-1}x^{t_1+\cdots+t_k}dt_1\cdots dt_k.
\end{align*}
Integrating by parts with respect to $t_k$ gives
\begin{align} \label{magic}
\int_{\frac{1}{\log x}}^{1-t_1-\cdots-t_{k-1}}t_k^{\frac{1}{k}-1}x^{t_k}dt_k
= &(1-t_1-\cdots-t_{k-1})^{\frac{1}{k}-1}\frac{x^{1-t_1-\cdots-t_{k-1}}}{\log x}
-\frac{e}{(\log x)^{\frac{1}{k}}} \nonumber\\
&+\left(1-\frac{1}{k}\right)\frac{1}{\log x}\int_{\frac{1}{\log x}}^{1-t_1-\cdots t_{k-1}}t_k^{\frac{1}{k}-2}x^{t_k}dt_k.
\end{align}
Therefore, the contribution of the first term of
(\ref{magic}) to the integral $I_1$ is
\begin{align*} 
\frac{x}{\Gamma\left(\frac{1}{k} \right)^k}
\int_{{\frac{1}{\log x}}}^{u_1}\cdots\int_{{\frac{1}{\log x}}}^{u_{k-1}}
t_1^{\frac{1}{k}-1}\cdots t_{k-1}^{\frac{1}{k}-1}(1-t_1-\cdots t_{k-1})^{\frac{1}{k}-1}dt_1\cdots dt_{k-1}.
\end{align*}
Note that 
\begin{equation}
 \begin{gathered}[b] \label{eq:difference}
F(u_1,\ldots,u_{k-1})x
=\frac{x}{\Gamma\left(\frac{1}{k} \right)^k}
\int_{{\frac{1}{\log x}}}^{u_1}\cdots\int_{{\frac{1}{\log x}}}^{u_{k-1}}
t_1^{\frac{1}{k}-1}\cdots t_{k-1}^{\frac{1}{k}-1}(1-t_1-\cdots t_{k-1})^{\frac{1}{k}-1}dt_1\cdots dt_{k-1} 
\\
+O\left(x\sum_{j=1}^{k-1}\int_0^{\frac{1}{\log x}}\underset{\substack{0 \leq t_i \leq u_i\\i \neq j}}{\int\cdots\int}t_1^{\frac{1}{k}-1}\cdots t_{k-1}^{\frac{1}{k}-1}(1-t_1-\cdots t_{k-1})^{\frac{1}{k}-1}dt_1\cdots dt_{k-1} \right).
 \end{gathered}
\end{equation}
Without loss of generality, it suffices to bound the term where $j=k-1.$ Similar to (\ref{repetition}), we have 
\begin{align} \label{idk11}
x\int_0^{\frac{1}{\log x}}t_{k-1}^{\frac{1}{k}-1}&\left(\int_0^{u_1}\cdots\int_0^{u_{k-2}}
t_1^{\frac{1}{k}-1}\cdots t_{k-2}^{\frac{1}{k}-1}(1-t_1-\cdots-t_{k-1})^{\frac{1}{k}-1}
dt_1\cdots dt_{k-2}\right)dt_{k-1} \nonumber \\
&\ll x\int_0^{\frac{1}{\log x}}t_{k-1}^{\frac{1}{k}-1}(1-t_{k-1})^{-\frac{1}{k}}dt_{k-1} 
\ll \frac{x}{(\log x)^{\frac{1}{k}}}.
\end{align}

On the other hand, the contribution of the second term of (\ref{magic}) to the integral $I_1$ is
\begin{align*} 
\ll (\log x)^{1-\frac{1}{k}}\prod_{j=1}^{k-1}
\int_{\frac{1}{\log x}}^{u_j}t_j^{\frac{1}{k}-1}x^{t_j}dt_j 
&\ll (\log x)^{1-\frac{1}{k}} 
\prod_{j=1}^{k-1} u_j^{\frac{1}{k}-1}\frac{x^{u_j}}{\log x}.
\end{align*}
If $u_j > \frac{1}{2k}$ for some $j=1,\ldots, k-1,$ then 
this is 
\begin{align} \label{idk21}
\ll x(\log x)^{-(k-1)/k} \leq \frac{x}{(\log x)^{\frac{1}{k}}}.
\end{align}
as $k \geq 2$. Otherwise, the contribution is 
\begin{align*}
\ll x^{u_1+\cdots+u_{k-1}}(\log x)^{1-\frac{1}{k}} \ll x^{1/2}.
\end{align*}

We also have
\begin{align*}
\int_{\frac{1}{\log x}}^{1-t_1-\cdots t_{k-1}}t_k^{\frac{1}{k}-2}x^{t_k}dt_k &\ll (1-t_1-\cdots-t_{k-1})^{\frac{1}{k}-2}\frac{x^{1-t_1-\cdots-t_{k-1}}}{\log x} 
\end{align*}
so that the contribution of the last term of (\ref{magic}) to the integral $I_1$ is 
\begin{align*}
\ll \frac{x}{\log x}&\int_{\frac{1}{\log x}}^{u_1}\cdots\int_{\frac{1}{\log x}}^{u_{k-1}}t_1^{\frac{1}{k}-1}\cdots
t_{k-1}^{\frac{1}{k}-1}(1-t_1-\cdots-t_{k-1})^{\frac{1}{k}-2}dt_1\cdots dt_{k-1}.
\end{align*}
Making the change of variables $s=1-t_1-\cdots-t_{k-1},$ this is bounded by
\begin{align}  \label{idkx1}
\frac{x}{\log x} \int_{\frac{1}{\log x}}^{1-\frac{k-1}{\log x}}s^{\frac{1}{k}-2} 
&\left(\int_{\frac{1}{\log x}}^{u_1}\cdots\int_{\frac{1}{\log x}}^{u_{k-2}}
t_1^{\frac{1}{k}-1}\cdots t_{k-2}^{\frac{1}{k}-1}(1-s-t_1-\cdots-t_{k-2})^{\frac{1}{k}-1}dt_1\cdots dt_{k-2}\right)ds.
\end{align}
Similar to (\ref{repetition}), the integral in the parentheses is $\ll (1-s)^{-\frac{1}{k}},$ and so (\ref{idkx1}) is 
\begin{align} \label{idk31}
\ll \frac{x}{\log x} \int_{\frac{1}{\log x}}^{1-\frac{k-1}{\log x}}s^{\frac{1}{k}-2}(1-s)^{-\frac{1}{k}}ds \ll \frac{x}{(\log x)^{\frac{1}{k}}}.
\end{align}
Collecting the main term of (\ref{eq:difference}) and the error terms (\ref{idk11}), (\ref{idk21}) and (\ref{idk31}), the lemma follows.
\end{proof}

\begin{lemma} \label{lemmaI_2}
The second integral $I_2$ is
\begin{align*}
\ll \frac{x}{(\log x)^{\frac{1}{k}}}.
\end{align*}
\end{lemma}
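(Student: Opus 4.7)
The plan is to apply iterated multivariate Riemann--Stieltjes integration by parts in all $k$ variables of $I_2$. Writing $w_j(x_j)=(\log x_j)^{-2}$, each IBP step in the variable $x_j$ converts $\int_e^{B_j} w_j(x_j)\,dg(x_j)$ into $[w_j(x_j)g(x_j)]_e^{B_j}-\int_e^{B_j} g(x_j)w_j'(x_j)\,dx_j$ with $w_j'(x_j)=-2x_j^{-1}(\log x_j)^{-3}$. Iterating in all $k$ variables, $I_2$ decomposes into a sum indexed by subsets $T\subseteq\{1,\ldots,k\}$ and corner choices $(c_j)_{j\notin T}\in\{e,B_j\}^{k-|T|}$ of terms of the form
\begin{align*}
\pm\int R\bigl((x_j)_{j\in T},(c_j)_{j\notin T}\bigr)\prod_{j\in T}w_j'(x_j)\prod_{j\notin T}w_j(c_j)\prod_{j\in T}dx_j,
\end{align*}
where $B_j=x^{u_j}$ for $j<k$ and $B_k=x/(x_1\cdots x_{k-1})$.

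In each term I substitute the bound from Lemma \ref{mainlemma}, namely $R(x_1,\ldots,x_k)\ll x_1\cdots x_k\sum_{j=1}^{k}(\log x_j)^{1/k}\prod_{i\neq j}(\log x_i)^{1/k+1}$. The key algebraic cancellation is that the factor $x_j$ in the $R$-bound cancels $x_j^{-1}$ from $w_j'(x_j)$ for $j\in T$, and combines with $w_j(B_j)=(\log B_j)^{-2}$ at $c_j=B_j$ to give $B_j(\log B_j)^{-2}$ for $j\notin T$. The $x$-factors telescope: when all $c_j=B_j$ we get $\prod_j B_j=x$, while boundary corners with some $c_j=e$ lose a factor $B_j/e$ from the total mass. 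The remaining log-integrals are bounded via $\int_e^{B}(\log x)^\alpha\,dx\ll B(\log B)^\alpha$ (uniformly for $\alpha$ bounded and $B\geq e$); the coupled $x_k$-integration is handled by the substitution $y_j=\log x_j$ together with a Laplace-type estimate at the constraint $y_1+\cdots+y_k=\log x$.

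After summing the contributions from all $2^k$ subset-corner choices and the $k$ terms in the $R$-bound, each term works out to $O(x/(\log x)^{1/k})$, yielding the claim. The main obstacle is the combinatorial bookkeeping together with uniformity in $u_1,\ldots,u_{k-1}$ down to the threshold $1/\log x$: when $u_j\approx 1/\log x$, factors $u_j^{-c}$ from the log-integrals can be polylogarithmically large, but the corresponding $B_j=x^{u_j}\approx e$ makes the overall $x$-mass small in every boundary configuration, providing the necessary compensation. This case-by-case verification generalizes the $k=1$ DDT and $k=2$ Nyandwi--Smati calculations to arbitrary $k$.
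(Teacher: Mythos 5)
Your plan is the same high-level strategy as the paper: transfer the weight $\prod_j(\log x_j)^{-2}$ off $dR$ by iterated Riemann--Stieltjes integration by parts, insert the pointwise bound on $R$ from Lemma~\ref{mainlemma}, and estimate the resulting log-weighted integrals after passing to $t_j=\log x_j/\log x$. The steps "substitute the $R$-bound", "telescope the $x$-factors", and "Laplace-type estimate at $\sum t_j=1$" all correspond to what the paper actually does.

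The gap is in the IBP decomposition itself. Your formula, indexed by $T\subseteq[k]$ and corners $(c_j)_{j\notin T}\in\{e,B_j\}^{k-|T|}$, is the correct IBP identity for a \emph{rectangular} box, but the region here is not a box: the upper limit $B_k=x/(x_1\cdots x_{k-1})$ depends on the remaining variables. After integrating out $x_k$ by parts, the boundary term $w_k(B_k(x_{<k}))\,R(x_{<k},B_k(x_{<k}))$ is a genuine function of $x_1,\ldots,x_{k-1}$ through \emph{both} arguments, so subsequent IBP in $x_j$ produces chain-rule contributions from $\partial_{x_j}B_k$ that your listed terms do not contain; in particular $\prod_{j\notin T}w_j(c_j)$ cannot be pulled outside the integral when $c_k=B_k$. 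The paper sidesteps this precisely by first performing a dyadic decomposition into small rectangular boxes $\prod_j[e^{l_j},e^{l_j+1}]$ indexed by $\boldsymbol{l}$ respecting the constraint $l_k\le\log x-l_1-\cdots-l_{k-1}$; on each such box the IBP and the $\max|R|$ bound are clean, and only after summing over $\boldsymbol{l}$ does one arrive at the global integrals that you would like to estimate directly. If you insert that decomposition before the IBP, your plan matches the paper's proof. (Also a small bookkeeping slip: you get $3^k$, not $2^k$, subset-corner terms, and after that you still have to run the same $t_j$-substitution and integration-by-parts-in-$t_k$ argument as in Lemma~\ref{lemmaI_1} to close the estimate.)
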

\begin{proof} 
The integral $I_2$ is bounded by
\begin{align*}
\underset{\substack{l_j \leq  u_j \log x  , j=1,\ldots, k-1\\l_{k} \leq  \log x-l_1-\cdots-l_{k-1} }}{\sum \cdots \sum} I_{2}^{(\boldsymbol{l})},
\end{align*}
where 
\begin{align*}
I_{2}^{(\boldsymbol{l})}:=\underset{x_j \in [e^{l_j}, e^{l_j+1}], j=1,\dots,k}{\int \cdots \int}
\frac{1}{(\log x_1)^2}\cdots\frac{1}{(\log x_k)^2}dR(x_1,\ldots,x_k).
\end{align*}
By integration by parts, for each $\boldsymbol{l},$ the integral $I_{2}^{(\boldsymbol{l})}$ is bounded by
\begin{align*}
\sum_{J \subseteq [k]}
\underset{x_j \in [e^{l_j}, e^{l_j+1}], j\in J}{\int \cdots \int} 
\left(\sum_{x_j \in \{e^{l_j}, e^{l_j+1}\}, j \notin J} |R(x_1,\ldots, x_k)|\right)
\prod_{j \notin J}\frac{1}{{l_j}^2}\prod_{j \in J}\left|d\left(\frac{1}{(\log x_j)^2} \right) \right|
=:\sum_{J \subseteq [k]} I_{2}^{(\boldsymbol{l};J)}.
\end{align*}
For each subset $J \subseteq [k],$ the integral $I_{2}^{(\boldsymbol{l};J)}$ is
\begin{align*}
\ll  \left(\max_{x_j \in [e^{l_j}, e^{l_j+1}], j=1,\ldots,k}|R(x_1,\ldots,x_k)|\right)\underset{x_j \in [e^{l_j}, e^{l_j+1}], j=1,\ldots,k}{\int \cdots \int} \prod_{j \notin J}\frac{1}{x_j (\log x_j)^2}\prod_{j \in J} & \frac{1}{x_j (\log x_j)^3}\nonumber\\
\cdot & dx_1 \cdots dx_k.
\end{align*}
 Applying Lemma \ref{mainlemma}, this is
\begin{align*}
\ll \sum_{i=1}^k\underset{x_j \in [e^{l_j}, e^{l_j+1}], j=1,\ldots,k}{\int \cdots \int} \frac{1}{\log x_i} \prod_{j \notin J}\frac{1}{(\log x_j)^{1-\frac{1}{k}}} \prod_{j \in J} \frac{1}{(\log x_j)^{2-\frac{1}{k}}} dx_1\cdots dx_k.
\end{align*}
Summing over every $\boldsymbol{l},$ we have
\begin{align*}
\underset{\substack{l_j \leq  u_j \log x  , j=1,\ldots, k-1\\l_k \leq  \log x -l_1-\cdots-l_{k-1}}}{\sum \cdots \sum} I_2^{(\boldsymbol{l};J)}
\ll \sum_{i=1}^k \int_{e}^{x^{u_1}e}\cdots \int_{e}^{x^{u_{k-1}}e}\int_{e}^{\frac{e^kx}{x_1\cdots x_{k-1}}} &\frac{1}{\log x_i} \prod_{j \notin J}\frac{1}{(\log x_j)^{1-\frac{1}{k}}} \\
\cdot & \prod_{j \in J} \frac{1}{(\log x_j)^{2-\frac{1}{k}}} 
dx_1\cdots dx_k.
\end{align*}

To avoid repetitions, we only bound the contribution of the term where $i=k$ here.
Making the change of variables $x_j=x^{t_j}$ for $j=1,\ldots,k-1$, it becomes
\begin{align*}
\frac{1}{(\log x)^{|J|}} \int_{\frac{1}{\log x}}^{u_1+\frac{1}{\log x}} \cdots \int_{\frac{1}{\log x}}^{u_{k-1}+\frac{1}{\log x}}\int_{\frac{1}{\log x}}^{1+\frac{k}{\log x}-t_1-\cdots-t_{k-1}}t_k^{-1} \prod_{j \notin J}t_j^{\frac{1}{k}-1}\prod_{j \in J}t_j^{\frac{1}{k}-2} x^{t_1+\cdots+t_k}
 dt_1\cdots dt_k,
\end{align*}
which is
\begin{align} \label{almostthere}
\ll \int_{\frac{1}{\log x}}^{u_1+\frac{1}{\log x}} \cdots \int_{\frac{1}{\log x}}^{u_{k-1}+\frac{1}{\log x}}\int_{\frac{1}{\log x}}^{1+\frac{k}{\log x}-t_1-\cdots-t_{k-1}}&t_1^{\frac{1}{k}-1}\cdots t_{k-1}^{\frac{1}{k}-1}t_k^{\frac{1}{k}-2}
x^{t_1+\cdots+t_k}
dt_1\cdots dt_k.
\end{align}
Integrating by parts with respect to $t_k$ gives
\begin{align*}
\int_{\frac{1}{\log x}}^{1+\frac{k}{\log x}-t_1-\cdots-t_{k-1}} t_k^{\frac{1}{k}-2}x^{t_k}dt_k 
\ll \left(1+\frac{k}{\log x}-t_1-\cdots-t_{k-1} \right)^{\frac{1}{k}-2}\frac{x^{1-t_1-\cdots-t_{k-1}}}{\log x}.
\end{align*}
Therefore, the expression (\ref{almostthere}) is
\begin{align*}
\ll \frac{x}{\log x} \int_{\frac{1}{\log x}}^{u_1+\frac{1}{\log x}} \cdots \int_{\frac{1}{\log x}}^{u_{k-1}+\frac{1}{\log x}} t_1^{\frac{1}{k}-1}\cdots t_{k-1}^{\frac{1}{k}-1}
\left(1+\frac{k}{\log x}-t_1-\cdots-t_{k-1} \right)^{\frac{1}{k}-2} dt_1\cdots d_{k-1}.
\end{align*}
Similar to (\ref{repetition}), this is
\begin{align*}
\ll \frac{x}{\log x}\int_{\frac{2}{\log x}}^{1+\frac{1}{\log x}}
s^{\frac{1}{k}-2}
\left( 1+\frac{k}{\log x}-s \right)^{-\frac{1}{k}}ds
\ll \frac{x}{(\log x)^{\frac{1}{k}}}.
\end{align*}
Finally, the lemma follows from summing over every subset $J \subseteq [k].$
\end{proof}


\section{Proof of Theorem \ref{thm:dirichlet2}}
We first define the function field analogue of the multiple Dirichlet series $\mathcal{D}(s_1,\dots,s_k).$



\begin{definition}
For $(s_1,\dots,s_k) \in \Omega$, the multiple Dirichlet series $\mathcal{D}_{\mathbb{F}_q[x]}(s_1,\dots,s_k)$ is defined as
\begin{align*} 
\underset{F_1,\dots,F_k \in \mathcal{M}_q}{\sum\cdots
\sum}\frac{\tau_k(F_1\cdots F_k)^{-1}}{q^{s_1\deg F_1+\cdots+s_k \deg F_k}}.
\end{align*}
\end{definition}
Having  the multiple Dirichlet series $\mathcal{D}_{\mathbb{F}_q[x]}(s_1,\dots,s_k)$ in hand, it is now clear that we can follow exactly the same steps as before. Moreover, since the function field zeta function $\zeta_{\mathbb{F}_q}(s)$ never vanishes (see \cite[Chapter~2]{rosen2002number}), some of the computations above can be simplified considerably. To avoid repetitions, the complete proof is omitted here.

\section{Proof of Theorem \ref{thm:dirichlet3}}\label{section5} 
One may repeat the same argument using a multiple Dirichlet series but a more direct and elementary proof is presented here.
We begin with the combinatorial analogue of the mean of divisor functions.
\begin{lemma} \label{6.1} Let $\alpha \in \mathbb{C}\setminus \mathbb{Z}_{\leq 0}$ and $n\in \mathbb{Z}_{\geq 0}.$ Then we have
\begin{align} \label{eq:lem6.11}
\frac{1}{n!}\sum_{\sigma \in S_n}\tau_{\alpha}(\sigma)={n+\alpha-1 \choose n}.
\end{align}
Moreover, we have the estimate
\begin{align} \label{eq:lem6.12}
{n+\alpha-1 \choose n}=
\begin{cases} 
1 &\mbox{if } n = 0, \\
\frac{1}{\Gamma \left(\alpha \right)}n^{\alpha-1}\left(1+O_{\alpha}\left(\frac{1}{n}\right) \right) & \mbox{if } n\geq1.
\end{cases}
\end{align}
\end{lemma}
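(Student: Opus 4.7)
The plan is to prove the identity via the exponential generating function for permutations weighted by cycle count, and then handle the asymptotic by writing the binomial coefficient in terms of Gamma functions and applying Stirling's formula.

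First I would note that $\sum_{\sigma\in S_n}\tau_{\alpha}(\sigma)=\sum_{k\geq 0}\left[{n\atop k}\right]\alpha^k$, and recall the classical exponential generating function identity
\begin{align*}
\sum_{n\geq 0}\frac{1}{n!}\left(\sum_{\sigma\in S_n}\alpha^{c(\sigma)}\right)z^n = \exp\!\left(\alpha\sum_{m\geq 1}\frac{z^m}{m}\right) = (1-z)^{-\alpha},
\end{align*}
which follows from the exponential formula applied to the species of cyclic orders (each cycle of length $m$ contributes $z^m/m$ to the EGF, weighted by $\alpha$). Extracting the $n$-th coefficient via the generalized binomial theorem gives $\frac{1}{n!}\sum_{\sigma\in S_n}\alpha^{c(\sigma)}=[z^n](1-z)^{-\alpha}=\binom{n+\alpha-1}{n}$, which is \eqref{eq:lem6.11}. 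As a sanity check (and to avoid any worry about complex $\alpha$), one could equivalently give a direct combinatorial proof by induction: inserting the element $n+1$ into a permutation of $[n]$ either creates a new singleton cycle (contributing a factor $\alpha$) or is spliced into one of $n$ positions inside an existing cycle (preserving $c(\sigma)$), which yields the recursion $\sum_{\sigma\in S_{n+1}}\alpha^{c(\sigma)}=(n+\alpha)\sum_{\sigma\in S_n}\alpha^{c(\sigma)}$. Unwinding, $\sum_{\sigma\in S_n}\alpha^{c(\sigma)}=\alpha(\alpha+1)\cdots(\alpha+n-1)=\Gamma(n+\alpha)/\Gamma(\alpha)$, which equals $n!\binom{n+\alpha-1}{n}$.

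For \eqref{eq:lem6.12}, the case $n=0$ is the empty product. For $n\geq 1$ I would write
\begin{align*}
\binom{n+\alpha-1}{n} = \frac{\Gamma(n+\alpha)}{\Gamma(\alpha)\Gamma(n+1)}
\end{align*}
and invoke the standard Stirling-type asymptotic $\Gamma(n+\alpha)/\Gamma(n+1)=n^{\alpha-1}(1+O_{\alpha}(1/n))$, valid uniformly for $\alpha$ in compact subsets of $\mathbb{C}\setminus\mathbb{Z}_{\leq 0}$ and $n\to\infty$. Dividing by $\Gamma(\alpha)$ yields the stated estimate.

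There is no real obstacle here: the main line of work, the EGF identity, is a textbook consequence of the exponential formula, and the Gamma-ratio asymptotic is classical. The only mild care needed is to ensure that \eqref{eq:lem6.12} holds uniformly in the implicit constant for all finite $n\geq 1$ (not just asymptotically), which follows because $\Gamma(n+\alpha)/(\Gamma(n+1)n^{\alpha-1})$ is a continuous nonvanishing function of $n\in[1,\infty)$ tending to $1$, so its deviation from $1$ is $O_\alpha(1/n)$ on $[1,\infty)$.
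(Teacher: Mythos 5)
Your proposal is correct, and for the identity \eqref{eq:lem6.11} it takes a genuinely different route from the paper. The paper proves \eqref{eq:lem6.11} by direct enumeration: it builds a permutation with exactly $k$ cycles cycle-by-cycle, derives an explicit summation formula for the unsigned Stirling number $\left[{n\atop k}\right]$ over compositions $i_1+\cdots+i_k=n$, and then identifies that expression with the coefficient of $\alpha^k$ in the rising factorial $\alpha(\alpha+1)\cdots(\alpha+n-1)$. You instead propose two alternatives: the exponential generating function identity $\sum_{n}\frac{z^n}{n!}\sum_{\sigma\in S_n}\alpha^{c(\sigma)}=(1-z)^{-\alpha}$ with coefficient extraction, or a one-step insertion induction showing $\sum_{\sigma\in S_{n+1}}\alpha^{c(\sigma)}=(n+\alpha)\sum_{\sigma\in S_n}\alpha^{c(\sigma)}$. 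The insertion recursion is arguably the most elementary of the three — it gets the rising factorial immediately with no bookkeeping over cycle lengths and no EGF machinery — whereas the paper's construction is heavier but produces the explicit Stirling formula as a byproduct (at the cost of some care with the indexing in the product over $j$). For the asymptotic \eqref{eq:lem6.12} you and the paper do the same thing: express the binomial coefficient as $\Gamma(n+\alpha)/(\Gamma(\alpha)\Gamma(n+1))$ and apply Stirling; your remark that the $O_\alpha(1/n)$ bound holds for all $n\geq 1$ (not merely asymptotically) by continuity and nonvanishing of the ratio on $[1,\infty)$ is a correct and welcome bit of care, since the lemma is stated for all $n\geq 1$.
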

\begin{proof}
Although (\ref{eq:lem6.11}) is fairly standard, say for instance one may 
apply \cite[Corollary~5.1.9]{MR1676282} with $f \equiv \alpha$, we provide a detailed proof here for the sake of completeness.
Adopting the notations in Section \ref{section2}, we write
\begin{align*}
\sum_{\sigma \in S_n}\tau_{\alpha}(\sigma)
&=\sum_{k=0}^{n}\left[{n\atop k}\right]\alpha^k.
\end{align*}

Now any permutation $\sigma \in S_n$ with $k$ disjoint cycles can be constructed by the following procedure.
To begin with, there are $(n-1)(n-2)\cdots(n-i_1+1)$ ways of choosing $i_1-1$ distinct integers from $[n]\setminus\{1\}$ to form a cycle $C_1$ with length $i_1$ containing $1.$ Then, fix any integer $m \in [n]$ not contained in the cycle $C_1$. Similarly, there are $(n-i_1-1)\cdots(n-i_1-i_2+1)$ ways of choosing $i_2-1$ integers from $[n]\setminus C_1$ to form another cycle $C_2$ with length $i_2$ containing $m.$  Repeating the same procedure until $i_1+\cdots+i_k=n$, we arrive at a permutation with $k$ disjoint cycles.

Therefore, the expression (\ref{eq:lem6.11}) follows from the explicit formula
\begin{align*}
\left[{n\atop k}\right]=\underset{i_1+\cdots+i_k=n}{\sum_{i_1=1}^n\cdots\sum_{i_k=1}^n}\frac{(n-1)!}{\prod_{j=1}^{k}(n-i_1-\cdots-i_j)},
\end{align*}
which can be seen as the coefficient of ${\alpha}^k$ in the falling factorial
\begin{align*}
(n+\alpha-1)(n+\alpha-2)\cdots(\alpha+1)\alpha.
\end{align*}

On the other hand, to prove (\ref{eq:lem6.12}), we express the binomial coefficient as a ratio of gamma functions, followed by the application of Stirling's formula.
\end{proof}

Similar to Theorem \ref{thm:dirichlet}, without loss of generality we shall assume 
$u_1,\ldots u_{k-1}, 1-u_1-\cdots-u_{k-1} \geq \frac{1}{n}.$ 
Interchanging the order of summation, we have 
\begin{align} \label{b4kickoff}
\frac{1}{n!}\sum_{\sigma \in S_n}\frac{1}{\tau_k(\sigma)}
\underset{{\substack{[n]=A_1 \sqcup \cdots \sqcup A_k\\ \sigma(A_i)=A_i, i=1,\dots,k \\ 0 \leq |A_i|  \leq nu_i,  i=1,\dots,k-1}}}{\sum\cdots\sum}1
&=\frac{1}{n!}\underset{\substack{0 \leq m_i \leq nu_i \\ i=1,\dots,k-1}}{\sum\cdots\sum}{n \choose m_1,\dots,m_k}\prod_{i=1}^k
\left(\sum_{\sigma \in S_{m_i}}\frac{1}{\tau_k(\sigma)} \right) \nonumber\\
&=\underset{\substack{0 \leq m_i \leq nu_i \\ i=1,\dots,k-1}}{\sum\cdots\sum}
\prod_{i=1}^k \left(\frac{1}{m_i!}\sum_{\sigma \in S_{m_i}}\frac{1}{\tau_k(\sigma)} \right),
\end{align}
where $m_k:=n-m_1-\cdots -m_{k-1}.$\\
Note that $\tau_k(\sigma)^{-1}=\tau_{1/k}(\sigma).$
 Applying (\ref{eq:lem6.11}) from Lemma \ref{6.1}, the expression (\ref{b4kickoff}) equals
\begin{align} \label{kickoff}
\underset{\substack{0 \leq m_i \leq nu_i \\ i=1,\dots,k-1}}{\sum\cdots\sum}\prod_{i=1}^k
{m_i+\frac{1}{k}-1 \choose m_i}.
\end{align}




Let $I \subseteq [k-1]$ be a nonempty subset. Then using (\ref{eq:lem6.12}) from Lemma \ref{6.1}, the contribution of $m_i=0$ to (\ref{kickoff}) for $i\in I$ is 
\begin{align} 
\ll \underset{\substack{1 \leq m_i \leq nu_i\\ i \notin I}}{\sum\cdots\sum}
\left(\prod_{i \notin I}m_i^{\frac{1}{k}-1}\right)m_k^{\frac{1}{k}-1}
= n^{-\frac{|I|}{k}}\underset{\substack{1 \leq m_i \leq nu_i \\ i \notin I}}{\sum\cdots\sum}
&\left(\prod_{i \notin I}\left(\frac{m_i}{n}\right)^{\frac{1}{k}-1}\right)
\nonumber \\
\cdot & \left(1-\frac{m_1}{n}-\cdots-\frac{m_{k-1}}{n}\right)^{\frac{1}{k}-1}
n^{-(k-|I|)}, \nonumber
\end{align}
which is
\begin{align} \label{iI}
\ll n^{-\frac{|I|}{k}} \underset{\substack{0 \leq t_i \leq u_i \\ i \notin I}}{\int\cdots\int}\left(\prod_{i \notin I}t_i^{\frac{1}{k}-1}\right)
\left(1-\sum_{i \notin I}t_i\right)^{\frac{1}{k}-1}\prod_{i \notin I}dt_i
\ll  n^{-\frac{|I|}{k}}.\
\end{align}

Also, the contribution of $m_j>nu_j-1$ for some $j=1,\dots,k-1$ to (\ref{kickoff}) given that $m_1,\dots,m_k\geq 1$ is 
\begin{align} 
\ll \sum_{j=1}^{k-1}{(nu_j)}^{\frac{1}{k}-1}\underset{\substack{1 \leq m_i \leq nu_i\\ i\neq j}}{\sum\cdots\sum}
\prod_{\substack{i=1\\i\neq j}}^k m_i^{\frac{1}{k}-1} 
=n^{-\frac{1}{k}} 
\sum_{j=1}^{k-1}{(nu_j)}^{\frac{1}{k}-1}\underset{\substack{1 \leq m_i \leq nu_i\\ i\neq j}}{\sum\cdots\sum}
\prod_{\substack{i=1\\i\neq j}}^k\left(\frac{m_i}{n}\right)^{\frac{1}{k}-1}. \nonumber
\end{align}
Since $u_j \geq \frac{1}{n}$ for $j=1,\dots,k-1,$ this is
\begin{align} \label{nu_j-1}
&\ll n^{-\frac{1}{k}} \sum_{j=1}^{k-1} (nu_j)^{\frac{1}{k}-1}\underset{\substack{0 \leq t_i \leq u_i, i=1,\dots,k-1\\i \neq j}}{\int\cdots\int}
\prod_{\substack{i=1\\i \neq j}}^{k-1}t_i^{\frac{1}{k}-1}
(1-t_1-\cdots-t_{k-1})^{\frac{1}{k}-1}dt_1\cdots dt_{k-1} \nonumber\\
&\ll n^{-\frac{1}{k}} \sum_{j=1}^{k-1} (nu_j)^{\frac{1}{k}-1}\left(1-u_j\right)^{-\frac{1}{k}} 
\ll n^{-\frac{1}{k}}.
\end{align}
Collecting the error terms 
(\ref{iI}) and (\ref{nu_j-1}), the expression (\ref{kickoff}) equals
\begin{align} \label{generic}
\underset{\substack{1 \leq m_i \leq nu_i-1 \\ i=1,\dots,k-1}}{\sum\cdots\sum}\prod_{i=1}^k{m_i+\frac{1}{k}-1\choose m_i}+O(n^{-\frac{1}{k}}).
\end{align}
 Applying (\ref{eq:lem6.12}) from Lemma \ref{6.1},  the main term of (\ref{generic}) is the Riemann sum
\begin{align}\label{riemsum}
\frac{1}{\Gamma \left(\frac{1}{k} \right)^k}\underset{\substack{1 \leq m_i \leq nu_i-1 \\ i=1,\dots,k-1}}{\sum\cdots\sum}
\prod_{i=1}^k \left(\frac{m_i}{n}\right)^{\frac{1}{k}-1}\frac{1}{n^{k-1}},
\end{align}
with an error term 
\begin{align} \label{funderror}
\ll \frac{1}{n}\sum_{j=1}^{k}\left(\frac{m_j}{n}\right)^{\frac{1}{k}-2}\underset{\substack{1 \leq m_i \leq nu_i-1\\ i=1,\dots,k-1}}{\sum\cdots\sum}
\prod_{\substack{i=1\\i \neq j}}^k\left(\frac{m_i}{n}\right)^{\frac{1}{k}-1}\frac{1}{n^{k-1}}.
\end{align}

Let us first bound the error term (\ref{funderror}). For each $j=1,\dots,k-1,$ we have
\begin{align} 
\frac{1}{n}&\sum_{1 \leq m_j \leq nu_j-1}\left(\frac{m_j}{n} \right)^{\frac{1}{k}-2}\underset{\substack{1 \leq m_i \leq nu_i-1 \\i=1,\dots,k-1, i\neq j}}{\sum\cdots\sum}\prod_{\substack{i=1\\i \neq j}}^{k-1}\left(\frac{m_i}{n} \right)^{\frac{1}{k}-1}\frac{1}{n^{k-1}} \nonumber\\
&\ll \frac{1}{n} \int_{\frac{1}{n}}^{1-\frac{1}{n}} t_j^{\frac{1}{k}-2}
\underset{\substack{0 \leq t_i \leq u_i\\i=1,\dots,k-1, i \neq j}}{\int\cdots\int}
\prod_{\substack{i=1\\i \neq j}}^{k-1}t_i^{\frac{1}{k}-1}
(1-t_1-\cdots-t_{k-1})^{\frac{1}{k}-1}dt_1\cdots dt_{k-1} \nonumber
\\
&\ll \frac{1}{n}\int_{\frac{1}{n}}^{1-\frac{1}{n}}
t_j^{\frac{1}{k}-2}(1-t_j)^{-\frac{1}{k}}dt_j 
\ll n^{-\frac{1}{k}}. \nonumber
\end{align}
Arguing similarly for $j=k,$ we also have
\begin{align} 
\frac{1}{n}\underset{\substack{1 \leq m_i \leq nu_i-1 \\ i=1,\dots,k-1}}{\sum\cdots\sum}\prod_{i=1}^{k-1}\left(\frac{m_i}{n} \right)^{\frac{1}{k}-1}\left(\frac{m_k}{n} \right)^{\frac{1}{k}-2}\frac{1}{n^{k-1}} \ll n^{-\frac{1}{k}}.
\nonumber
\end{align}
Therefore, the error term (\ref{funderror}) is $\ll n^{-\frac{1}{k}}$ and we are left with the main term (\ref{riemsum}).

The distribution function $F(u_1,\ldots,u_{k-1})$ equals
\begin{align} \label{fullintegral}
\frac{1}{\Gamma \left(\frac{1}{k} \right)^k}&\underset{\substack{1 \leq m_i \leq nu_i-1 \\ i=1,\dots,k-1 }}{\sum\cdots\sum}\int_{\frac{m_1}{n}}^{\frac{m_1+1}{n}}\cdots\int_{\frac{m_{k-1}}{n}}^{\frac{m_{k-1}+1}{n}}t_1^{\frac{1}{k}-1}\cdots t_{k-1}^{\frac{1}{k}-1}(1-t_1-\cdots-t_{k-1})^{\frac{1}{k}-1}dt_1\cdots dt_{k-1} \nonumber\\
&+O\left(\sum_{j=1}^{k-1}\int_{0}^{\frac{1}{n}}t_j^{\frac{1}{k}-1}\underset{\substack{0 \leq t_i \leq u_i\\ i\neq j}}{\int\cdots\int}\prod_{\substack{i=1\\i \neq j}}^{k-1}t_j^{\frac{1}{k}-1}(1-t_1-\cdots-t_{k-1})^{\frac{1}{k}-1}dt_1\cdots dt_{k-1} \right) \nonumber\\
&+O\left(\sum_{j=1}^{k-1}\int_{u_j-\frac{1}{n}}^{u_j}t_j^{\frac{1}{k}-1}\underset{\substack{0 \leq t_i \leq u_i \\ i\neq j}}{\int\cdots\int}\prod_{\substack{i=1\\i \neq j}}^{k-1}t_j^{\frac{1}{k}-1}(1-t_1-\cdots-t_{k-1})^{\frac{1}{k}-1}dt_1\cdots dt_{k-1} \right).
\end{align}
The first error term in (\ref{fullintegral}) is 
\begin{align} \label{idk1}
\ll \sum_{j=1}^{k-1}\int_{0}^{\frac{1}{n}}t_j^{\frac{1}{k}-1}(1-t_j)^{-\frac{1}{k}}dt_j
\ll n^{-\frac{1}{k}}.
\end{align}
The second error term in (\ref{fullintegral}) is 
\begin{align} \label{idk2}
\ll \sum_{j=1}^{k-1}\int_{u_j-\frac{1}{n}}^{u_j}t_j^{\frac{1}{k}-1}(1-t_j)^{-\frac{1}{k}}dt_j
&\leq \sum_{j=1}^{k-1}\int_{0}^{\frac{1}{n}}t_j^{\frac{1}{k}-1}(1-t_j)^{-\frac{1}{k}}dt_j \nonumber\\
&\ll n^{-\frac{1}{k}}.
\end{align}

By Taylor's theorem, for $(t_1,\dots,t_{k-1}) \in 
\left[\frac{m_1}{n}, \frac{m_1+1}{n} \right]\times \cdots \times [\frac{m_{k-1}}{n}, \frac{m_{k-1}+1}{n}],$
we have
\begin{align*} 
t_1^{\frac{1}{k}-1}\cdots t_{k-1}^{\frac{1}{k}-1}(1-t_1-\cdots-t_{k-1})^{\frac{1}{k}-1}
=\prod_{i=1}^k\left(\frac{m_i}{n}\right)^{\frac{1}{k}-1}
+O\left(\frac{1}{n}\sum_{j=1}^{k}\left(\frac{m_j}{n}\right)^{\frac{1}{k}-2}
\prod_{\substack{i=1\\i \neq j}}^k\left(\frac{m_i}{n}\right)^{\frac{1}{k}-1}\right).
\end{align*}
Using the approximation, we conclude from (\ref{fullintegral}), (\ref{idk1}) and (\ref{idk2}) that
\begin{align*}
F(u_1,\ldots,u_{k-1})=&\frac{1}{\Gamma \left(\frac{1}{k} \right)^k}\underset{\substack{1 \leq m_i \leq nu_i-1 \\ i=1,\dots,k-1}}{\sum\cdots\sum}
\prod_{i=1}^k \left(\frac{m_i}{n}\right)^{\frac{1}{k}-1}\frac{1}{n^{k-1}}
+O\left(n^{-\frac{1}{k}} \right)\\
&+O\left(\frac{1}{n}
\underset{\substack{1 \leq m_i \leq nu_i-1 \\ i=1,\dots,k-1}}{\sum\cdots\sum}
\sum_{j=1}^{k}
\left(\frac{m_j}{n}\right)^{\frac{1}{k}-2}
\prod_{\substack{i=1\\i \neq j}}^k\left(\frac{m_i}{n}\right)^{\frac{1}{k}-1}\frac{1}{n^{k-1}} \right),
\end{align*}
and the last error term here is exactly the same as (\ref{funderror}), which is again $\ll n^{-\frac{1}{k}}.$

\section{Factorization into $k$ parts in the general setting } \label{section6}

With a view to model Dirichlet distribution with arbitrary parameters, we further explore the factorization of integers into $k$ parts in the general setting using multiplicative functions of several variables defined below. 

\begin{definition} An arithmetic function of $k$ variables $F \colon \mathbb{N}^k \to \mathbb{C}$ is said to be multiplicative if it satisfies the condition 
$F(1,\ldots,1)=1$ and the functional equation
\begin{align*}
F(m_1n_1,\ldots,m_kn_k)=F(m_1,\ldots,m_k)F(n_1,\ldots,n_k)
\end{align*}
whenever $(m_1\cdots m_k, n_1\cdots n_k)=1,$ or equivalently, 
\begin{align*}
F(n_1,\ldots,n_k)=\prod_{p}F(p^{v_p(n_1)},\ldots,p^{v_p(n_k)}),
\end{align*}
where $v_p(n):=\max\{k \geq 0 \, : \, p^k | n\}.$
\end{definition}

\begin{remark}
Multiplicative functions of several variables, such as the ``GCD function'' and the ``LCM function'' are interesting for their own sake. See \cite{toth2014multiplicative} for further discussion.
\end{remark}

To adapt the proof of Theorem \ref{thm:dirichlet}, we 
consider the following class of 
multiplicative functions.
\begin{definition} \label{defclass1}
Let $\boldsymbol{\alpha}=(\alpha_1,\ldots,\alpha_k), \boldsymbol{\beta}=(\beta_1,\ldots,\beta_k), \boldsymbol{c}= (c_1,\ldots,c_k)
,\boldsymbol{\delta}=(\delta_1,\ldots,\delta_k)$
with $\alpha_j, \beta_j, c_j>0, \delta_j\geq 0$ for $j=1,\ldots,k.$ We denote by
$\mathcal{M}(\boldsymbol{\alpha};\boldsymbol{\beta},\boldsymbol{c}, \boldsymbol{\delta})$ the class of non-negative multiplicative functions of $k$ variables $F \colon \mathbb{N}^k \to \mathbb{C}$ satisfying the following conditions:
\begin{enumerate}[label=(\alph*)]
\item (divisor bound) for $j=1,\ldots,k,$ we have $|F(1,\ldots,\overbrace{n}^{\text{$j$-th}},\ldots 1)| \leq \tau_{\beta_j}(n),$ 
where 
\begin{align*}
\tau_{\beta}(n)
:=\prod_{p}{v_p(n)+\beta-1 \choose v_p(n)}
\end{align*}
is the generalized divisor function;
\item (analytic continuation) let $s=\sigma+it \in \mathbb{C}.$ For $j=1,\ldots,k$, the Dirichlet series 
\begin{align*}
\mathcal{P}_F(s;\boldsymbol{\alpha},j):=\sum_{p}\frac{F(1,\ldots,\overbrace{p}^{\text{$j$-th}},\ldots 1)-\alpha_j}{p^{s}}
\end{align*}
defined for $\sigma>1$ can be continued analytically to the domain where $\sigma>1-\frac{c_j}{\log (2+|t|)};$

\item (growth rate) for $j=1,\ldots,k,$ in the domain above we have the bound
\begin{align*}
\mathcal{P}_F(s;\boldsymbol{\alpha},j) \leq \delta_j\log (2+|t|).
\end{align*}
\end{enumerate}
\end{definition}
For instance, the multiplicative function 
$F(n_1,\ldots,n_k)=\tau_k(n_1\cdots n_k)^{-1}$ belongs to the class $\mathcal{M}\left(\boldsymbol{\frac{1}{k}};\boldsymbol{\frac{1}{k}},
\boldsymbol{1},
\boldsymbol{0}\right).$

Applying the Mellin transform to (higher derivatives of) the multiple Dirichlet series
\begin{align*}
\mathcal{D}_F(s_1,\ldots,s_k):=
\sum_{n_1=1}^{\infty}\cdots
\sum_{n_k=1}^{\infty}\frac{F(n_1,\ldots,n_k)}{n_1^{s_1}\cdots n_k^{s_k}}
\end{align*}
as before, one can prove the following generalization of Lemma \ref{mainlemma}.

\begin{lemma} \label{lemmageneral}
Given a multiplicative function of $k$ variables $ F \in \mathcal{M}(\boldsymbol{\alpha};\boldsymbol{\beta}, \boldsymbol{c}, \boldsymbol{\delta}).$ Let  $m \geq 2$ be an integer and $x_1,\dots,x_k \geq e.$ We denote by $S_F(x_1,\dots,x_k;m)$ the weighted sum
\begin{align*}
\sum_{d_1\leq x_1}\cdots\sum_{d_k\leq x_k}(\log d_1)^m \cdots (\log d_k)^m F(d_1,\ldots,d_k).
\end{align*}
Then there exists $m_0=m_0(\boldsymbol{\alpha},\boldsymbol{\beta}, \boldsymbol{c}, \boldsymbol{\delta})$ such that for any integer $m\geq m_0,$ we have 
\begin{align*}
S_F(x_1,\ldots,x_k;m) = \prod_{j=1}^k\frac{1}{\Gamma(\alpha_j)}\int_{1}^{x_j}
(\log y_j)^{\alpha_j+m-1} dy_j + R_F(x_1,\ldots,x_k;m)
\end{align*}
with
\begin{align*}
R_F(x_1,\dots,x_k;m) \ll x_1\cdots x_k\sum_{j=1}^k \left(\prod_{\substack{i=1\\i \neq j}}^k  (\log x_i)^{\alpha_i+m-1}\right) (\log x_j)^{\alpha_j+m-2}.
\end{align*}
\end{lemma}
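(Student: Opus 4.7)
The plan is to mirror the four-step strategy of the proof of Lemma \ref{mainlemma}---Mellin inversion, localization, approximation, and completion---replacing $\mathcal{D}(s_1,\ldots,s_k)$ by $\mathcal{D}_F(s_1,\ldots,s_k)$ and working with the $m$-th partial derivative in each variable. First I would establish the analogues of Lemmas \ref{lem:interchange}--\ref{power}. Condition (a) gives absolute convergence of $\mathcal{D}_F$ in $\Omega$ via the bound $|F(n_1,\ldots,n_k)| \leq \prod_j \tau_{\beta_j}(n_j)$, together with an Euler product expansion. Expanding each local factor and using condition (b) yields the factorization
\begin{align*}
\mathcal{D}_F(s_1,\ldots,s_k) = \prod_{j=1}^{k}\zeta(s_j)^{\alpha_j}\cdot G_F(s_1,\ldots,s_k),
\end{align*}
where $G_F$ continues analytically to the polydisc $\prod_{j=1}^{k}\{\Re(s_j) > 1 - \tfrac{c_j}{\log(2+|\Im(s_j)|)}\}$; condition (c) controls its growth there by $\prod_j (2+|\Im(s_j)|)^{O(\delta_j)}$. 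A local power series expansion of $G_F$ around $(1,\ldots,1)$ combined with $\zeta(s) = (s-1)^{-1}+O(1)$ then yields, for any $m\geq 1$,
\begin{align*}
\frac{\partial^{mk}}{\partial s_1^m\cdots\partial s_k^m}\mathcal{D}_F(s_1,\ldots,s_k)
= \Bigl(\prod_{j=1}^{k}\tfrac{\Gamma(\alpha_j+m)}{\Gamma(\alpha_j)}\Bigr)\prod_{j=1}^{k}(s_j-1)^{-\alpha_j-m}\cdot\bigl(G_F(1,\ldots,1)+O(|s_1-1|+\cdots+|s_k-1|)\bigr)
\end{align*}
in a neighborhood of the polydiagonal point, which plays the role of Lemma \ref{power}.

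Next I would introduce the smooth weights $\phi_j,\psi_j$ from Lemma \ref{mellin}, which converts $S_F(x_1,\ldots,x_k;m)$ into a $k$-fold Mellin integral of $\frac{\partial^{mk}}{\partial s_1^m\cdots\partial s_k^m}\mathcal{D}_F$ against $\Psi_j(s_j)x_j^{s_j}$. I would then decompose each contour $\Re(s_j)=1+\tfrac{1}{2\log x_j}$ into $I_j^{(1)}\cup I_j^{(2)}\cup I_j^{(3)}$ exactly as in Lemma \ref{mainlemma}, apply Cauchy's formula over the distinguished boundary $\Gamma_{s_1,\ldots,s_k}$ (with $T_j$ depending on $m$ and $\delta_j$), and insert the growth estimate for $\mathcal{D}_F$ coming from the factorization above. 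This yields a derivative bound of the form
\begin{align*}
\frac{\partial^{mk}}{\partial s_1^m\cdots\partial s_k^m}\mathcal{D}_F(s_1,\ldots,s_k) \ll \prod_{s_j\in I_j^{(1)}}|s_j-1|^{-\alpha_j-m}\prod_{s_j\in I_j^{(2)}}(\log T_j)^{\alpha_j+m}(2+|\Im(s_j)|)^{O(\delta_j)}\prod_{s_j\in I_j^{(3)}}(\log x_j)^{\alpha_j+m}.
\end{align*}

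The main obstacle, and precisely the reason for the hypothesis $m\geq m_0(\boldsymbol{\alpha},\boldsymbol{\beta},\boldsymbol{c},\boldsymbol{\delta})$, appears on the middle contour $I_j^{(2)}$: the polynomial factor $(2+|\Im(s_j)|)^{O(\delta_j)}$ produced by condition (c) must be absorbed by the Mellin decay $|\Psi_j(s_j)|\ll T_j^{\,j-1}/|s_j|^{\,j}$ of Lemma \ref{mellin}. Choosing $m_0$ large enough that the net exponent of $|\Im(s_j)|$ is sufficiently negative for all $j$ (roughly $m_0 > C\max_j\delta_j + \alpha_j + 1$) ensures the integrals over $I_j^{(2)}$ and $I_j^{(3)}$ are negligible and that the dominant contribution comes from $I_1^{(1)}\times\cdots\times I_k^{(1)}$.

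Finally, applying the local expansion of the $m$-th derivative of $\mathcal{D}_F$ on the cube $\prod_j I_j^{(1)}$, extending each vertical integral back to the full line at negligible cost, and invoking Hankel's lemma (Lemma \ref{Hankel}) in each coordinate gives
\begin{align*}
\frac{1}{2\pi i}\int_{\Re(s_j)=1+\frac{1}{2\log x_j}}(s_j-1)^{-\alpha_j-m}x_j^{s_j}\frac{ds_j}{s_j} = \frac{1}{\Gamma(\alpha_j+m)}\int_1^{x_j}(\log y_j)^{\alpha_j+m-1}dy_j.
\end{align*}
Combining with the identity $\Gamma(\alpha_j+m)=\Gamma(\alpha_j)\prod_{i=0}^{m-1}(\alpha_j+i)$ cancels the multiplicative constants in the derivative expansion and produces the claimed main term $\prod_j \frac{1}{\Gamma(\alpha_j)}\int_1^{x_j}(\log y_j)^{\alpha_j+m-1}dy_j$, while collecting the contributions of the residual terms (with one factor of $(s_j-1)^{-\alpha_j-m+1}$) along the $I_j^{(1)}$ gives the stated error $R_F(x_1,\ldots,x_k;m)$ of the asserted shape.
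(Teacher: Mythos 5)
Your proposal mirrors the strategy the paper (only) sketches for Lemma \ref{lemmageneral}: repeat the Mellin-inversion, localization and Hankel-completion steps of Lemma \ref{mainlemma} for $\mathcal{D}_F$ and its $m$-fold partial derivatives, with conditions (a)--(c) of Definition \ref{defclass1} supplying the analogues of Lemmas \ref{lem:interchange}, \ref{lem:growth} and \ref{power}. You also correctly identify why the threshold $m_0(\boldsymbol{\alpha},\boldsymbol{\beta},\boldsymbol{c},\boldsymbol{\delta})$ appears: condition (c) introduces a polynomial factor $(2+|\Im(s_j)|)^{O(\delta_j)}$ in the bound for $G_F$ which is absent when $F=\tau_k^{-1}$ (there $\delta_j=0$), and on $I_j^{(2)}$ where $|\Im(s_j)|$ can be of size $(\log x_j)^{O(1)}$ this contributes extra powers of $\log x_j$; choosing $m$ large enough that $\alpha_j+m-2$ dominates those powers recovers the stated error $R_F$.

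One point to make explicit: your analogue of Lemma \ref{power} correctly carries the leading coefficient $G_F(1,\dots,1)$, but your closing step asserts the main term $\prod_j\Gamma(\alpha_j)^{-1}\int_1^{x_j}(\log y_j)^{\alpha_j+m-1}dy_j$ with no such factor. For $F=\tau_k^{-1}$ the computation in Lemma \ref{power} shows $a_{\boldsymbol{0}}=1$, but nothing in Definition \ref{defclass1} forces $G_F(1,\dots,1)=1$ for a general $F\in\mathcal{M}(\boldsymbol{\alpha};\boldsymbol{\beta},\boldsymbol{c},\boldsymbol{\delta})$, so as a free-standing statement the main term should read $G_F(1,\dots,1)\prod_j\Gamma(\alpha_j)^{-1}\int_1^{x_j}(\log y_j)^{\alpha_j+m-1}dy_j$ (or a normalization should be built into the class). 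This is harmless downstream --- in Theorem \ref{thmgeneral} the identity $\mathcal{D}_F(s,\dots,s)=\sum_n f(n)n^{-s}$ forces the same constant to appear in $\bigl(\sum_{m\le x}f(m)\bigr)^{-1}$, so it cancels --- but your write-up should track it rather than silently drop it.
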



To model the Dirichlet distribution by factorizing integers into $k$ parts, we consider the following class of pairs of multiplicative functions.
\begin{definition}
Let $\theta>0$ and $\boldsymbol{\alpha}$ be a positive $k$-tuple.
We denote by
$\mathcal{M}_{\theta}(\boldsymbol{\alpha})$ 
the class of pairs of multiplicative functions $(f; G) $ satisfying the following conditions:
\begin{enumerate}[label=(\alph*)]
\item for $n \geq 1,$ we have 
\begin{align*}
\sum_{n=d_1\cdots d_k}G(d_1,\ldots,d_k)>0;
\end{align*}
\item the multiplicative function $f$ belongs to the class $\mathcal{M}(\theta;\beta',c', \delta' )$ for some $\beta', c', \delta';$
\item the multiplicative function of $k$ variables
\begin{align*}
F(d_1,\ldots,d_k):=f(n)\cdot 
\frac{G(d_1,\ldots,d_k)}{\sum_{n=e_1\cdots e_k}G(e_1,\ldots,e_k)}
\end{align*}
belongs to the class $\mathcal{M}(\boldsymbol{\alpha};\boldsymbol{\beta}, \boldsymbol{c}, \boldsymbol{\delta})$ for some $\boldsymbol{\beta}, \boldsymbol{c}, \boldsymbol{\delta},$ where $n=d_1\cdots d_k.$
\end{enumerate}
\end{definition}

\begin{remark}
By definition, we must have $\theta=\alpha_1+\cdots+\alpha_k.$
\end{remark}

Then, applying Lemma \ref{lemmageneral} followed by partial summation as before, one can prove the following generalization of Theorem \ref{thm:dirichlet}.

\begin{theorem} \label{thmgeneral}
Let $(f;G)$ be a pair of multiplicative functions belonging to the class $\mathcal{M}_{\theta}(\boldsymbol{\alpha}).$ Then uniformly for $x \geq 2$ and $u_1,\ldots,u_{k-1} \geq 0$ satisfying $u_1+\cdots+u_{k-1}\leq 1,$ we have
\begin{align*}
\left(\sum_{m \leq x}f(m)\right)^{-1}&\sum_{n\leq x}f(n)
\left( \sum_{n=e_1\cdots e_k}G(e_1,\ldots,e_k) \right)^{-1}
\underset{n=d_1\cdots d_{k}}{\sum_{d_1 \leq n^{u_1}}\cdots
\sum_{d_{k-1} \leq n^{u_{k-1}}}\sum_{d_k \leq n}}
G(d_1,\ldots,d_k)
\\
=&F_{\boldsymbol{\alpha}}(u_1,\ldots, u_{k-1})
+O\left(\frac{1}{(\log x)^{\min \{1,\alpha_1,\ldots,\alpha_k\}}} \right).
\end{align*}
\end{theorem}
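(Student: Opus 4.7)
The plan is to run the Section~4 argument with three adaptations: replace the specific weight $1/\tau_k$ by the $k$-variable function $F$ given in condition (c) of $\mathcal{M}_\theta(\boldsymbol{\alpha})$, use Lemma \ref{lemmageneral} in place of Lemma \ref{mainlemma}, and replace the trivial mean $\sum_{m\leq x}1=x+O(1)$ by the Selberg--Delange asymptotic $\sum_{m\leq x}f(m)=C_f\,x(\log x)^{\theta-1}(1+O(1/\log x))$, which follows from $f\in\mathcal{M}(\theta;\beta',c',\delta')$ by a standard single-variable contour integration on $\sum f(n)n^{-s}$ using conditions (b)--(c). Since $f(n)G(d_1,\dots,d_k)/\sum_{n=e_1\cdots e_k}G(e_1,\dots,e_k)=F(d_1,\dots,d_k)$ by definition, the numerator unfolds to
\begin{align*}
\underset{\substack{d_1\cdots d_k\leq x\\ d_j\leq (d_1\cdots d_k)^{u_j},\,j<k}}{\sum\cdots\sum}F(d_1,\dots,d_k).
\end{align*}

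First, exactly as at the start of Section~4, I would replace $d_j\leq n^{u_j}$ by $d_j\leq x^{u_j}$ and reduce to $\tfrac{1}{\log x}\leq u_j$ and $u_1+\cdots+u_{k-1}\leq 1-\tfrac{1}{\log x}$. The discrepancies are controlled by estimates analogous to \eqref{finaleerror} and \eqref{replace} using the divisor bound (a) together with the Selberg--Delange mean of the univariate slices of $F$; each costs an admissible factor $(\log x)^{-\alpha_j}$ against $\sum_{m\leq x}f(m)$. Then, partial summation in $k$ variables against the weighted sum $S_F(x_1,\dots,x_k;m)$ and Lemma \ref{lemmageneral} with $m$ sufficiently large turn the main term into
\begin{align*}
\frac{1}{\prod_{j=1}^k\Gamma(\alpha_j)}\int_{e}^{x^{u_1}}\!\cdots\!\int_{e}^{x^{u_{k-1}}}\!\int_{e}^{x/(x_1\cdots x_{k-1})}\prod_{j=1}^k(\log x_j)^{\alpha_j-1}\,dx_1\cdots dx_k
\end{align*}
up to a remainder driven by $R_F$. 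Changing variables $x_j=x^{t_j}$ and carrying out the integration by parts of Lemma \ref{lemmaI_1} with $1/k$ replaced by $\alpha_j$, this integral evaluates to
\begin{align*}
\frac{x(\log x)^{\theta-1}}{\prod_{j=1}^k\Gamma(\alpha_j)}\int_{0}^{u_1}\!\cdots\!\int_{0}^{u_{k-1}}\prod_{j=1}^{k-1}t_j^{\alpha_j-1}(1-t_1-\cdots-t_{k-1})^{\alpha_k-1}\,dt_1\cdots dt_{k-1},
\end{align*}
up to an error of order $x(\log x)^{\theta-1-\min\{\alpha_1,\dots,\alpha_k\}}$ obtained by redoing Lemma \ref{lemmaI_2} with exponents $\alpha_j-1$ and $\alpha_j-2$ in place of $1/k-1$ and $1/k-2$.

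Dividing by $\sum_{m\leq x}f(m)$, the factor $x(\log x)^{\theta-1}$ cancels and $F_{\boldsymbol\alpha}(u_1,\dots,u_{k-1})$ emerges with total error $O\bigl((\log x)^{-\min\{1,\alpha_1,\dots,\alpha_k\}}\bigr)$---the ``$1$'' from the Selberg--Delange remainder for $f$, each ``$\alpha_j$'' from the singular boundary integrals. The main obstacle lies in the integration-by-parts analysis in the asymmetric setting: Lemmas \ref{lemmaI_1}--\ref{lemmaI_2} exploited the symmetry $\alpha_j=1/k$ to collapse every boundary term into a single one-dimensional singular integral via the substitution $s=1-t_1-\cdots-t_{k-1}$, whereas with unequal $\alpha_j$ one must verify that the various contributions near $t_j\to 0$ and $t_1+\cdots+t_{k-1}\to 1$ combine to yield an error governed precisely by $\min\{\alpha_1,\dots,\alpha_k\}$. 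A secondary technical point is matching the leading constant produced by Lemma \ref{lemmageneral} against the Selberg--Delange constant $C_f$, so that the ratio extracts exactly $F_{\boldsymbol\alpha}$; this is the multivariable analogue of the identity $a_{\boldsymbol 0}=1$ from Lemma \ref{power} and boils down to a factorization at each prime imposed by condition (c).
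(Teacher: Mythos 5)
Your outline is precisely the proof the paper intends: the paper explicitly defers the argument with "applying Lemma \ref{lemmageneral} followed by partial summation as before," and your sketch fills in exactly that route, including the unfolding $f(n)\,G(d_1,\ldots,d_k)/\sum_{n=e_1\cdots e_k}G = F(d_1,\ldots,d_k)$, the reduction to the interior region $u_j \geq 1/\log x$ and $\sum u_j \leq 1 - 1/\log x$, the partial summation against $S_F$, and the Selberg--Delange normalization of $\sum_{m\leq x}f(m)$. You also correctly flag the two genuine technical points, the asymmetric analogue of Lemmas \ref{lemmaI_1}--\ref{lemmaI_2} and the cancellation of the Euler-product constant, the latter following from the identity $\sum_{d_1\cdots d_k=n}F(d_1,\ldots,d_k)=f(n)$ forced by the definition of the class $\mathcal{M}_{\theta}(\boldsymbol{\alpha})$, which guarantees that the singular constant governing $S_F$ near $(1,\ldots,1)$ agrees with the one for $\sum f(m)$ and hence divides out.
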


Finally, we conclude with the following generalization of Corollary \ref{cor1}.
\begin{corollary} \label{gencor}
Given a pair of multiplicative functions $(f;G)$ belonging to the class $\mathcal{M}_{\theta}(\boldsymbol{\alpha}).$
For $x\geq 1,$ let $n$ be a random integer chosen from $[1,x]$ with probability $\left(\sum_{m \leq x}f(m)\right)^{-1}f(n)$ and $(d_1,\ldots,d_k)$ be a random $k$-tuple chosen from the set of all possible factorization $\{(m_1,\ldots,m_k) \in \mathbb{N}^k\, : \, n=m_1\cdots m_k\}$ with probability $\left( \sum_{n=e_1\cdots e_k}G(e_1,\ldots,e_k) \right)^{-1}G(d_1,\ldots,d_k)$. Then we have the convergence in distribution
\[\left(\frac{\log d_1}{\log n}, \ldots, \frac{\log d_k}{\log n}\right) \xrightarrow[]{d}
\mathrm{Dir}\left(\alpha_1,\ldots,\alpha_k \right)\]
as $x \to \infty.$
\end{corollary}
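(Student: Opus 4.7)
The plan is to deduce Corollary \ref{gencor} from Theorem \ref{thmgeneral} by the standard translation from convergence of distribution functions to convergence in distribution, mirroring the derivation of Corollary \ref{cor1} from Theorem \ref{thm:dirichlet}.

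First, I would observe that the left-hand side of the identity in Theorem \ref{thmgeneral} is exactly the joint distribution function
\[
\mathbb{P}\bigl(D_n^{(1)} \leq u_1,\ldots, D_n^{(k-1)} \leq u_{k-1}\bigr),
\qquad D_n^{(j)} := \frac{\log d_j}{\log n},
\]
where $n \in [1,x]$ and the factorization $(d_1,\ldots,d_k)$ are chosen with the prescribed weighted probabilities given by $f$ and $G$. Hence Theorem \ref{thmgeneral} says that this distribution function converges uniformly in $(u_1,\ldots,u_{k-1})$ to $F_{\boldsymbol{\alpha}}(u_1,\ldots,u_{k-1})$, the distribution function of $\mathrm{Dir}(\alpha_1,\ldots,\alpha_k)$ in its first $k-1$ coordinates, with quantitative rate $O\bigl((\log x)^{-\min\{1,\alpha_1,\ldots,\alpha_k\}}\bigr)$.

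Next, because $F_{\boldsymbol{\alpha}}$ is the distribution function of the absolutely continuous measure with density $f_{\boldsymbol{\alpha}}$ on $\Delta^{k-1}$, it is continuous on all of $[0,1]^{k-1}$. Pointwise convergence of distribution functions at every continuity point of the limit is, by the multivariate Portmanteau/Helly-Bray theorem, equivalent to weak convergence of the pushforward measures. Equivalently, inclusion-exclusion on the CDF gives
\[
\mathbb{P}\bigl((D_n^{(1)},\ldots,D_n^{(k-1)}) \in R\bigr) \longrightarrow \int_R dF_{\boldsymbol{\alpha}}
\]
for every axis-parallel rectangle $R \subseteq [0,1]^{k-1}$, and since such rectangles form a $\pi$-system generating the Borel $\sigma$-algebra and the limit measure is determined by its values on them, weak convergence on the full Borel $\sigma$-algebra follows.

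Finally, since $D_n^{(1)}+\cdots+D_n^{(k)}=1$ identically, the full random vector $(D_n^{(1)},\ldots,D_n^{(k)})$ takes values in $\Delta^{k-1}$, and is the image of $(D_n^{(1)},\ldots,D_n^{(k-1)})$ under the continuous map $(t_1,\ldots,t_{k-1}) \mapsto (t_1,\ldots,t_{k-1},1-t_1-\cdots-t_{k-1})$. The continuous mapping theorem then promotes the convergence of the first $k-1$ coordinates to convergence in distribution of the full $k$-tuple to $\mathrm{Dir}(\alpha_1,\ldots,\alpha_k)$. There is no substantial obstacle here: all the arithmetic content has been absorbed into Theorem \ref{thmgeneral}, and the remaining steps are the same measure-theoretic boilerplate used in deducing Corollary \ref{cor1}.
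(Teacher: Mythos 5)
Your proposal is correct and matches the paper's implicit argument: the paper derives Corollary \ref{cor1} from Theorem \ref{thm:dirichlet} by exactly this route (CDF convergence implies convergence on axis-parallel rectangles, then approximation of Borel sets), and states Corollary \ref{gencor} as the analogous immediate consequence of Theorem \ref{thmgeneral}. Your added invocation of the continuous mapping theorem to pass from the first $k-1$ coordinates to the full $k$-tuple is a correct (and slightly more explicit) rendering of the same measure-theoretic step.
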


\begin{remark}
See \cite{bareikis2017modeling}, 
\cite{bareikis2021bivariate} for the cases where $k=2,3$ respectively, in which $G(d_1,\ldots,d_k)$ takes the form $(f_1\ast \cdots \ast f_{k-1} \ast 1)(d_1\cdots d_k)$ for some multiplicative functions $f_1,\ldots,f_{k-1}:\mathbb{N} \to \mathbb{C}.$
\end{remark}


\begin{example}
For $k\geq 2,$ let $\theta, \lambda_1, \ldots, \lambda_k>0.$ We consider the pair of multiplicative functions
\begin{align*}
f(n)=\tau_{\theta}(n);\quad G(d_1,\ldots,d_k)
= \tau_{\lambda_1}(d_1) \cdots 
\tau_{\lambda_k}(d_k).
\end{align*}
Then the Dirichlet distribution of dimension $k$
\begin{align*}
\mathrm{Dir}\left(\frac{\theta\lambda_1}{\lambda_1+\cdots+\lambda_k},\ldots,\frac{\theta\lambda_k}{\lambda_1+\cdots+\lambda_k} \right)
\end{align*}
can be modelled in the sense of Corollary \ref{gencor}. In particular, when $\theta, \lambda_1,\ldots, \lambda_k=1,$ it reduces to Theorem \ref{thm:dirichlet}. 
\end{example}

\begin{example}
For $q\geq 3,$ let $\{a_1,\ldots,a_{\varphi(q)}\}$ be a reduced residue system$\Mod{q}$. We consider the pair of multiplicative functions
\begin{align*}
f(n)=
\begin{cases}
1 & \mbox{if $(n,q)=1$},\\
0 & \mbox{otherwise}
\end{cases}
;\quad 
G(d_1,\ldots,d_k)=
\begin{cases}
1 & \mbox{if $p|d_j$ implies $p \equiv a_j \Mod{q} $ for $j=1,\ldots,\varphi(q)$},\\
0 & \mbox{otherwise}.
\end{cases}
\end{align*}
Then the Dirichlet distribution of dimension $\varphi(q)$
\begin{align*}
\mathrm{Dir}\left(\frac{1}{\varphi(q)},\ldots,\frac{1}{\varphi(q)} \right)
\end{align*}
can be modelled in the sense of Corollary \ref{gencor}. In particular, when $q=4,$ it reduces to 
\cite[Exercise 6.2.22]{montgomery2007multiplicative}.
\end{example}

\begin{example}
For $k\geq 2,$ we consider the pair of multiplicative functions
\begin{align*}
f(n)=
\begin{cases}
1 & \mbox{if $n$ is a sum of two squares} ,\\
0 & \mbox{otherwise}
\end{cases}
;\quad G(d_1,\ldots,d_k)\equiv 1.
\end{align*}
Then the Dirichlet distribution of dimension $k$
\begin{align*}
\mathrm{Dir}\left(\frac{1}{2k},\ldots,\frac{1}{2k} \right)
\end{align*}
can be modelled in the sense of Corollary \ref{gencor}. In particular, when $k=2,$ it reduces to \cite[Theorem 2]{daoud2015distribution}.
\end{example}

\begin{example}
For $k\geq 2,$ we consider the pair of multiplicative functions
\begin{align*}
f(n)=
\begin{cases}
1 & \mbox{if $n$ is square-free} ,\\
0 & \mbox{otherwise}
\end{cases}
;\quad G(d_1,\ldots,d_k)\equiv 1.
\end{align*}
Then the Dirichlet distribution of dimension $k$
\begin{align*}
\mathrm{Dir}\left(\frac{1}{k},\ldots,\frac{1}{k} \right)
\end{align*}
can be modelled in the sense of Corollary \ref{gencor}. In particular, when $k=2,$ it reduces to \cite[Theorem 2]{MR3581662} with $y=x.$
\end{example}

\begin{example}
For $k\geq 2$, let $\mathcal{R}$ be a subset of $\{\{i,j\}\,:\, 1 \leq i \neq j \leq k\}.$ We consider the pair of multiplicative functions
\begin{align*}
f(n) \equiv 1;\quad 
G(d_1,\ldots,d_k)=
\begin{cases}
1 & \mbox{if $(d_i,d_j)=1$ whenever 
$\{i,j\}\notin \mathcal{R},$
}\\
0 & \mbox{otherwise.}
\end{cases}
\end{align*}
Then the Dirichlet distribution of dimension $k$
\begin{align*}
\mathrm{Dir}\left( \frac{1}{k},\ldots,\frac{1}{k} \right)
\end{align*}
can be modelled in the sense of Corollary \ref{gencor}. In particular, when $k=2^r$ for $r \geq 2,$ it reduces to \cite[Th\'eor\`eme 1.1]{de2016processus} with a suitable subset $\mathcal{R}$ via total decomposition sets (see \cite[Theorem 0.20]{MR1414678}), which is itself a generalization of \cite[Theorem 2.1]{bareikis2012cesaro} for $r=2.$
\end{example}

\begin{example}
For $k\geq 3$, we consider the pair of multiplicative functions
\begin{align*}
f(n) \equiv 1;\quad 
G(d_1,\ldots,d_k)=
\prod_{j=1}^{k-1}
\frac{1}{\tau(d_j\cdots d_k)}.
\end{align*}
Then the Dirichlet distribution of dimension $k$
\begin{align*}
\mathrm{Dir}\left(\frac{1}{2},\frac{1}{4},\ldots,\frac{1}{2^{k-2}},\frac{1}{2^{k-1}},\frac{1}{2^{k-1}} \right)
\end{align*}
can be modelled in the sense of Corollary \ref{gencor}. In particular, when $k=3,$ it reduces to \cite[Th\'eor\`eme 1.2]{de2016processus}.
\end{example}

Unsurprisingly, we expect that Theorem \ref{thmgeneral} should also hold for polynomials or permutations. Specifically, in the realm of permutations, the counterpart to multiplicative functions is the generalized Ewens measure (see \cite{elboim2022multiplicative}). Detailed proofs will be provided in the author's doctoral thesis. 

\section*{Acknowledgements}
The author is grateful to Andrew Granville and Dimitris Koukoulopoulos for their suggestions and encouragement. He would also like to thank Sary Drappeau for pointing out relevant papers, and the anonymous referee for helpful comments and corrections.

\printbibliography

\end{document}